\documentclass[12pt]{article}
\usepackage{amsmath}
\usepackage{graphicx,psfrag,epsf}
\usepackage{enumerate}
\usepackage{amssymb}
\usepackage{booktabs}
\usepackage{array}
\newcommand{\blind}{0}

\usepackage[authoryear]{natbib}
\usepackage[colorlinks=true, linkcolor=blue, citecolor=blue, urlcolor=blue]{hyperref}
\usepackage[skip=0.3cm]{caption}   % spacing above captions
\usepackage{subcaption}   % modern subfigure support
\usepackage{algorithm}   % for algorithm environment
\usepackage{algpseudocode} % for pseudocode
\usepackage{amsthm}
\usepackage{float}
\newtheorem{theorem}{Theorem}[section]
\newtheorem{proposition}{Proposition}[section]

\newtheorem{assumption}{Assumption}[section]
\begin{document}

\def\spacingset#1{\renewcommand{\baselinestretch}%
{#1}\small\normalsize} \spacingset{1}

%%%%%%%%%%%%%%%%%%%%%%%%%%%%%%%%%%%%%%%%%%%%%%%%%%%%%%%%%%%%%%%%%%%%%%%%%%%%%%

\date{} % Removes the date
\if0\blind
{
  \title{\bf A Replica Exchange Markov Chain Monte Carlo Method for Disconnected Implicit Manifolds via Tubular Relaxation}
  \author{
    Xuyuan Wang$^{\dagger}$\thanks{Corresponding author. Email: \texttt{xuyuan@ualberta.ca}; ORCID: [0009-0004-4470-8200].}
    \and
    Donglin Han\thanks{Email: \texttt{donglin3@ualberta.ca}; ORCID: [0009-0003-8502-0317].}
    \\ \medskip
    \textsuperscript{$\dagger$}These authors contributed equally.
    \\ \medskip
    Department of Mathematical and Statistical Sciences, University of Alberta
  }
  \maketitle
} \fi

\if1\blind
{
  \bigskip
  \bigskip
  \bigskip
  \begin{center}
    {\LARGE\bf A Replica Exchange Markov Chain Monte Carlo Method for Disconnected Implicit Manifolds via Tubular Relaxation}
  \end{center}
  \medskip
} \fi

\begin{abstract}
Markov chain Monte Carlo (MCMC) methods provide powerful framework for sampling unknown probability measures across a wide range of scientific applications. In some settings, the target distribution is supported on a lower-dimensional submanifold of Euclidean space defined by nonlinear constraints, motivating the development of constrained Hamiltonian Monte Carlo (CHMC) methods. Most existing CHMC algorithms rely on the assumption that the implicit manifold is connected, allowing local constrained integrators such as RATTLE to explore the posterior ergodically. In practice, this assumption is occasionally violated due to complex geometric structures induced by nonlinear constraints of a model.
We propose a replica exchange MCMC framework that couples a constrained chain evolving on the implicit manifold with a relaxed auxiliary chain defined in a tubular neighborhood of the constraint. The relaxed chain enables transitions between disconnected components. We show that the resulting algorithm enables sampling from a broader class of implicit manifolds, including those with disconnected components. We prove that the proposed sampler satisfies detailed balance, irreducibility, ergodicity, and convergence.
We also demonstrate its effectiveness on examples from molecular and biological dynamical systems.

\end{abstract}

\noindent%
{\it Keywords:}  
Markov chain Monte Carlo, 
Replica exchange,
Bayesin inverse problem,
Non-identifiability,
Molecular dynamics

\spacingset{1.45}
\section{Introduction}\label{sec1}
Markov Chain Monte Carlo (MCMC) methods are an important class of algorithms for sampling from complex probability measures when only an unnormalized target density is available. Thanks to this flexibility, MCMC has been widely adopted across scientific disciplines, including molecular dynamics \citep{J1}, statistical physics \citep{J2}, and the calibration of biological dynamical models \citep{J3}. In these applications, MCMC enables uncertainty quantification and statistical inference in settings where direct sampling is infeasible due to high dimensionality, nonlinear structure, or intractable normalizing constants.

In a number of important settings, the target probability measure is not supported on the full ambient Euclidean space but rather on a lower-dimensional manifold that is defined implicitly through holonomic constraints. This situation arises naturally in molecular dynamics, where one is often interested in Monte Carlo estimation of equilibrium measures subject to fixed bond lengths, bond angles, or other constraints on particle positions \citep{J4,J5}. These constraints define a configuration space that is a smooth submanifold of the ambient space, typically specified as the zero level set of a nonlinear constraint function. Sampling methods that fail to keep this geometric structure may suffer from poor efficiency or produce invalid samples.

A similar challenge arises in Bayesian inverse problems for parameter calibration in biological and epidemiological models governed by systems of differential equations. In such models, structural or practical non-identifiability is common: the available data are insufficient to uniquely determine all model parameters, even in the absence of observation noise \citep{J6}. Instead, only certain identifiable combinations of parameters can be inferred from the observable quantities.
From a geometric perspective, the inverse image of a calibrated identifiable parameter combination typically forms a lower-dimensional manifold embedded in the original parameter space. Consequently, recovering samples in the original parameterization from the calibrated parameter combination value requires algorithms that can accurately and efficiently explore the implicit manifold at the calibrated identifiable quantities \citep{Mlift,J7,J8}.

These considerations have motivated a growing body of work on numerical methods for sampling from probability measures constrained to implicit manifolds. One class of approaches is based on projecting stochastic differential equations, such as Langevin dynamics, onto the tangent space of the constraint manifold \citep{J4, J10, J9}. These methods employ local projection schemes that correct unconstrained dynamics by orthogonally projecting proposed moves back onto the manifold. 
Another line of research is rooted in constrained Hamiltonian dynamics. In this framework, auxiliary momentum variables are introduced and the target distribution is augmented to a phase-space measure, allowing the use of Hamiltonian Monte Carlo (HMC) type proposals \citep{M2, M3}. The evolution of the system is governed by Hamilton’s equations subject to holonomic constraints, ensuring that both position and momentum remain consistent with the manifold structure. Numerical integrators such as the SHAKE and RATTLE algorithms \citep{J11} enforce the constraints at each integration step while preserving symplectic structure and time reversibility. When combined with a Metropolis correction \citep{J17}, these methods yield valid MCMC algorithms that can achieve long-range proposals and improved mixing compared to purely diffusive schemes.

Despite these advances, sampling on implicitly defined manifolds remains challenging, particularly when the holonomic constraints induce disconnected implicit manifold \citep{J12}. Such disconnected manifolds arise naturally in several applications. In molecular dynamics, for example, nonlinear bond-length and bond-angle constraints can induce multiple disconnected configuration components corresponding to distinct molecular conformations \citep{J13}. Similarly, in inverse problems with parameter non-identifiability, identifiable parameter combinations are often obtained through algebraic mappings from the original parameter space \citep{J14}. As a result of algebraic symmetries, sign ambiguities, or polynomial invariants, the inverse image of an identifiable parameter value may decompose into multiple disconnected branches.

Disconnectedness poses a fundamental difficulty for most existing MCMC algorithms. Methods based on local projection of Langevin dynamics or constrained Hamiltonian systems generate proposals by evolving dynamics in the local tangent space of the manifold. These algorithms are inherently local: step sizes must be sufficiently small to ensure numerical stability, constraint satisfaction, and reversibility \citep{M5}. Consequently, transitions between disconnected components, are generally impossible. This limitation leads to a failure to explore the full support of the target distribution when the manifold is disconnected.

In this work, we propose a new MCMC sampling method on implicitly defined manifolds that addresses the challenge of disconnected components. Our approach is based on two key ideas. First, instead of restricting proposals strictly to the constraint manifold, we exploit the existence of local tubular neighborhoods around the manifold. Within such a neighborhood, points in the ambient space admit a well-defined projection onto the manifold, allowing us to construct proposal kernels that move in the surrounding space while remaining coupled to the geometry of the constraint. 
Second, to enable transitions between disconnected components, we apply the local proposal mechanism in a replica exchange framework \citep{J15}. By introducing a sequence of auxiliary distributions that progressively relax the constraint enforcement, we allow the chains to explore regions of the ambient space that connect isolated components of the manifold. We show that under regularity assumptions on the constraint mapping, the resulting Markov chain satisfies detailed balance with respect to the target measure and is ergodic over the full support of the manifold. 

The paper is organized as follows. In section \ref{sec2}, we present some of the related works to this matter. In section \ref{sec3}, we review some of the key concepts in the literature which forms a basis for our method, and introduce our methods. Section \ref{sec4} provides the numerical examples to demonstrate the effectiveness of our method. Lastly in section \ref{sec5} we provide a summary of the method and the potential  future directions.

\section{Related Work}\label{sec2}
The foundation of this paper builds on the well-established constrained Hamiltonian Monte Carlo (CHMC) methodology, which has been extensively studied in the literature. Representative works include \citet{J4}, who introduced a constrained hybrid Monte Carlo algorithm for sampling on manifolds arising in free energy calculations; \citet{M2}, who developed numerical schemes for constrained Langevin and Hamiltonian dynamics; \citet{M3}, who formulated MCMC methods on implicitly defined manifolds. These approaches, along with the work of \citet{M4}, follow the projection-based framework of \citet{M5}. This framework established the core conditions for sampling on manifolds while ensuring the process remains reversible.

In contrast, the problem of sampling from disconnected constraint sets has received relatively limited attention. Some works have considered related settings. For example, approaches rooted in algebraic geometry have been proposed to generate random samples from algebraic constraint sets by exploiting their underlying algebraic structure \citep{RPAM}. While such methods can handle disconnected solution sets, they are typically problem specific and do not extend to sampling from general probability measures defined on disconnected implicit manifolds.

\section{Background}\label{sec3}
We begin by introducing the basic mathematical definitions and notation required to discuss MCMC methods on general implicit manifolds. An implicit manifold $\mathcal{M}$ of co-dimension $m$ embedded in the ambient space $\mathbb{R}^n$ is defined as
\[
\mathcal{M} := \{ q \in \mathbb{R}^n : \xi(q) = 0 \},
\]
where the constraint map $\xi : \mathbb{R}^n \to \mathbb{R}^m$ is assumed to be analytically available. For the purposes of this work, we make the following assumptions on the constraint function $\xi$.
\begin{assumption}[Constraint function]\label{ass:xi}
The constraint function $\xi$ satisfies:
\begin{enumerate}
    \item[(i)] $\xi \in C^\infty(\mathbb{R}^n, \mathbb R^m)$, i.e., it is smooth on the ambient space;
    \item[(ii)] $\nabla \xi(q):=[\nabla\xi_1(x),\nabla\xi_2(x),\cdots,\nabla\xi_m(x)]\in\mathbb R^{n\times m}$ is uniformly of full rank on $\mathcal{M}$, i.e., $\nabla \xi(q)^\top \nabla \xi(q) \succeq c I_m$ for some constant $c > 0$ and all $q \in \mathcal{M}$;
    \item[(iii)] The Hessian $\nabla^2 \xi_i(q)$ is uniformly bounded on 
    $\mathcal{M}$, i.e.\ there exists $C > 0$ such that 
    $\|\nabla^2 \xi_i(q)\| \le C$ for all $q \in \mathcal{M}$ and 
    $i = 1,\ldots,m$.
\end{enumerate}
\end{assumption}
Under these assumptions, $\mathcal{M}$ is a smoothly embedded submanifold of $\mathbb{R}^n$.
The standard Euclidean inner product on $\mathbb{R}^n$ induces a Riemannian metric on $\mathcal{M}$, which in turn defines a Riemannian volume measure $\sigma_{\mathcal{M}}(dq)$ on the manifold. This measure is singular with respect to the $n$-dimensional Lebesgue measure on $\mathbb{R}^n$, i.e., $\mathcal{M}$ has zero Lebesgue measure in the ambient space.
The goal of an MCMC algorithm in this setting is to sample from a probability measure supported on $\mathcal{M}$ of the form
\begin{equation}\label{eq:target_measure}
\mu(dq) 
= \frac{e^{-V(q)} \, \sigma_{\mathcal{M}}(dq)}{\int_{\mathcal{M}} e^{-V(q)} \, \sigma_{\mathcal{M}}(dq)},
\end{equation}
where $V : \mathbb{R}^n \to \mathbb{R}$ is a potential energy function and $\sigma_{\mathcal{M}}$ is the surface measure induced by the ambient Euclidean metric on $\mathcal{M}$.
We make the following assumptions on the potential function $V$.
\begin{assumption}[Potential function]\label{ass:potential}
The potential function $V$ satisfies:
\begin{enumerate}
    \item[(i)] $V \in C^\infty(\mathbb{R}^n,\mathbb R)$, i.e., it is smooth on the ambient space;
    \item[(ii)] $V$ is bounded from below, i.e., there exists a constant $C \in \mathbb{R}$ such that $V(x) \ge C$ for all $x \in \mathbb{R}^n$;
    \item[(iii)] $V(x) \to \infty$ as $\|x\| \to \infty$, and $\int_{\mathbb{R}^n} e^{-V(x)} dx < \infty$.
\end{enumerate}
\end{assumption}
These conditions ensure that the target measure \eqref{eq:target_measure} is well-defined and normalizable. In physical terms, $V$ represents the potential energy of a Hamiltonian system, while in Bayesian inverse problems, $V$ corresponds to the negative log-posterior density.

\subsection{Hamiltonian Monte Carlo}\label{sec3.1}

Hamiltonian Monte Carlo (HMC)  is a widely used method for sampling probability measures defined on the ambient space $\mathbb{R}^n$. 
The key idea of HMC is to augment the configuration variable $q \in \mathbb{R}^n$ with an auxiliary momentum variable $p \in \mathbb{R}^n$, thereby lifting the problem to the phase space $(q,p) \in \mathbb{R}^{2n}$. 
A joint probability measure is then defined via the Hamiltonian
\[
H(q,p) = V(q) + \frac{1}{2}\|p\|_2^2,
\]
where $V(q)$ is the potential energy and $\frac{1}{2}\|p\|_2^2$ is the kinetic energy, corresponding to a standard Gaussian distribution on the momentum variable. The associated Hamiltonian dynamics are given by
\[
\dot{q} = \nabla_p H(q,p) = p, 
\qquad
\dot{p} = -\nabla_q H(q,p) = -\nabla V(q),
\]
which preserve the Hamiltonian $H$ and the canonical volume measure on phase space. 
These dynamics are time-reversible and symplectic, properties that make them well suited for constructing efficient MCMC proposal mechanisms \citep{M3}.
In practice, the continuous time Hamiltonian flow cannot be solved exactly and is approximated using a symplectic integrator, most commonly the Störmer--Verlet (leapfrog) scheme \citep{J16}. 
The resulting proposal is then accepted or rejected using a Metropolis--Hastings correction step \citep{J17} to ensure exact invariance of the target distribution. A standard HMC algorithm is summarized below.
\begin{algorithm}[htbp]
\caption{Hamiltonian Monte Carlo}
\label{alg:HMC}
\begin{algorithmic}[1]
\Require Initial position $q^{(0)} \in \mathbb{R}^n$, step size $\varepsilon > 0$, number of leapfrog steps $L$
\For{$k = 0,1,2,\dots$}
    \State Sample momentum $p^{(k)} \sim \mathcal{N}(0, I_n)$
    \State Set $(q_0, p_0) = (q^{(k)}, p^{(k)})$
    \For{$\ell = 0$ to $L-1$}
        \State $p_{\ell+\frac12} = p_\ell - \frac{\varepsilon}{2} \nabla V(q_\ell)$
        \State $q_{\ell+1} = q_\ell + \varepsilon \, p_{\ell+\frac12}$
        \State $p_{\ell+1} = p_{\ell+\frac12} - \frac{\varepsilon}{2} \nabla V(q_{\ell+1})$
    \EndFor
    \State Propose $(q',p') = (q_L,-p_L)$
    \State Accept with probability
    \[
    \alpha = \min\left\{1, \exp\bigl(-H(q',p') + H(q^{(k)},p^{(k)})\bigr)\right\}
    \]
    \State Set $q^{(k+1)} = q'$ with probability $\alpha$, otherwise $q^{(k+1)} = q^{(k)}$
\EndFor
\end{algorithmic}
\end{algorithm}
It can be shown that Algorithm~\ref{alg:HMC} admits the Hamiltonian $H$ as its stationary distribution. The target probability measure is then obtained by marginalizing out the momentum variable $p$.
\subsection{Constrained Hamiltonian Monte Carlo}\label{sec3.2}

While standard HMC is designed for probability measures supported on the ambient space $\mathbb{R}^n$, it cannot be directly applied to sample from probability measures supported on an implicit manifold of the form equation \eqref{eq:target_measure}. As discussed earlier, such a target measure is supported on a set of zero Lebesgue measure in the ambient space, and unconstrained Hamiltonian dynamics will almost surely leave the manifold after any finite integration steps. 
Therefore, appropriate constraints must be imposed on the Hamiltonian dynamics to ensure that the sampling process remains on the manifold $\mathcal{M}$. A common approach is to introduce Lagrange multipliers that enforce the constraint $\xi(q)=0$ along the Hamiltonian flow \citep{M4}. Consider the constrained Hamiltonian system
\begin{equation}\label{eq:CHMC_dyn}
\begin{cases}
&\dot{q} = p, \\
&\dot{p} = -\nabla V(q) + \nabla \xi(q)\,\lambda, \\
& 0 = \xi(q),
\end{cases}
\end{equation}
where $\lambda \in \mathbb{R}^m$ is a vector of Lagrange multipliers chosen so that the constraint $\xi(q)=0$ is satisfied for all time. 
In addition to the position constraint, consistency of the dynamics requires the momentum to lie in the cotangent space of the manifold, namely $T^*_q\mathcal{M} := \{p\in\mathbb{R}^n: \nabla \xi(q)^\top p = 0\}$,
so that the flow remains on $\mathcal{M}$.
The constrained Hamiltonian dynamics \eqref{eq:CHMC_dyn} preserve the Hamiltonian
as well as the canonical measure restricted to the cotangent bundle $T^*\mathcal{M}:=\{(p,q)\in\mathbb{R}^{2n}: \xi(q)=0, \nabla \xi(q)^\top p = 0 \}$. 
As in the unconstrained case, these dynamics can be approximated numerically.
A standard numerical integrator for constrained Hamiltonian systems is the RATTLE scheme, which is a symplectic and time-reversible extension of the leapfrog integrator \citep{J11}. 
At each integration step, RATTLE enforces both the position and momentum constraints through the solution of nonlinear equations for the Lagrange multipliers.
A single RATTLE step with step size $\varepsilon$ is given by
\begin{equation}\label{eq:RATTLE}
\begin{cases}
&p_{n+\frac12} = p_n - \frac{\varepsilon}{2}\nabla V(q_n) + \nabla \xi(q_n)\,\lambda_{n+\frac12}, \\
&q_{n+1} = q_n + \varepsilon p_{n+\frac12}, \\ &\xi(q_{n+1}) = 0, \\
&p_{n+1} = p_{n+\frac12} - \frac{\varepsilon}{2}\nabla V(q_{n+1}) + \nabla \xi(q_{n+1})\,\lambda_{n+1}, \\
&\nabla \xi(q_{n+1})^\top p_{n+1} = 0,
\end{cases}
\end{equation}
where the Lagrange multipliers $\lambda_{n+\frac12}, \lambda_{n+1} \in \mathbb{R}^m$
are chosen so that the position and momentum constraints are satisfied. Using this constrained Hamiltonian integrator, a constrained Hamiltonian Monte Carlo
(CHMC) algorithm can be obtained by replacing the classical leapfrog integrator Algorithm~\ref{alg:HMC} with the RATTLE integrator \eqref{eq:RATTLE}.
\begin{algorithm}[htbp]
\caption{Constrained Hamiltonian Monte Carlo}
\label{alg:CHMC}
\begin{algorithmic}[1]
\Require Initial position $q^{(0)} \in \mathcal{M}$,
step size $\varepsilon > 0$,
number of RATTLE steps $L$
\For{$k = 0,1,2,\dots$}
    \State Sample $\tilde p^{(k)} \sim \mathcal{N}(0,I_n)$
    \State Project momentum onto the cotangent space:
    \begin{equation}\label{mproj}
    p^{(k)} = \Pi_{q^{(k)}} \tilde p^{(k)}, 
    \quad
    \Pi_q = I - \nabla \xi(q)
    \bigl(\nabla \xi(q)^\top \nabla \xi(q)\bigr)^{-1}
    \nabla \xi(q)^\top
    \end{equation}
    \State Set $(q_0,p_0)=(q^{(k)},p^{(k)})$
    \For{$\ell = 0$ to $L-1$}
        \State Compute $(q_{\ell+1},p_{\ell+1})$
        using one RATTLE step \eqref{eq:RATTLE}
        \State \textbf{abort trajectory if reversibility is violated}
    \EndFor
    \State Propose $(q',p')=(q_L,-p_L)$
    \State Accept with probability
    \[
    \alpha = \min\left\{
    1,
    \exp\bigl(
    -H(q',p') + H(q^{(k)},p^{(k)})
    \bigr)
    \right\}
    \]
    \State Set $q^{(k+1)} = q'$ with probability $\alpha$,
    otherwise $q^{(k+1)} = q^{(k)}$
\EndFor
\end{algorithmic}
\end{algorithm}
In general, the momentum constraint $\nabla \xi(q)^\top p = 0$ is linear in $p$ and
can be interpreted as an orthogonal projection onto the cotangent space
$T^*_{q}\mathcal{M}$, which is well defined whenever $\nabla \xi(q)$ has full
rank. In contrast, the position constraint $\xi(q) = 0$ requires a nonlinear
projection onto the manifold $\mathcal{M}$. Depending on the step size $\varepsilon$
and the local curvature of $\mathcal{M}$ at $q_n$, such a projection may have multiple
solutions or no solution at all. Moreover, even when a projection exists, it need not
be an involution, which may break the time reversibility of the numerical integrator
and invalidate the Metropolis--Hastings acceptance probability.
As a result, a reversibility check of the position projection is required to guarantee the validity of the CHMC algorithm \citep{M5}. Such a check can be carried out by running the integrator \eqref{eq:RATTLE} backward in time at each step and verifying that the original state is recovered. If reversibility fails, the entire trajectory is discarded and the chain remains at the current state (Line~7 of Algorithm~\ref{alg:CHMC}). For a detailed
analysis of the mathematical properties of position projection, we refer the reader to the mathemtical analysis presented in \citet{M4}.

\subsection{Challenges}\label{sec3.3}
In the general setting considered here, we assume only that the implicit manifold
$\mathcal{M}$ is smooth. However, smoothness alone does not imply connectedness.
For example, let $\xi : \mathbb{R} \to \mathbb{R}$ be defined by
\(
\xi(q) = q^2 - 1.
\)
Then $\xi$ is smooth, and the resulting implicit manifold
\(
\mathcal{M} = \{ q \in \mathbb{R} : q^2 = 1 \} = \{-1,\,1\}
\)
is a smooth but disconnected manifold consisting of two disjoint components.
More generally, disconnected implicit manifolds arise naturally in nonlinear
constraint systems.
In general, there is not a simple criterion that guarantees
connectedness of $\mathcal{M}$, nor a method to determine the number of its connected
components a priori \citep{J18}. As a result, imposing connectedness as an assumption in the
algorithmic design is often unrealistic and excludes many practically relevant models.

As discussed in Section \ref{sec3.2}, CHMC relies on sufficiently small step
sizes $\varepsilon$ to ensure the existence, uniqueness, and reversibility of the
projection steps. The resulting dynamics are local and are
confined to a single connected component of $\mathcal{M}$, making transitions between
disconnected components impossible. 
In the following sections, we introduce auxiliary tubular relaxed chains combined with a
replica exchange strategy for sampling on disconnected implicit manifolds. We present
a detailed construction of the exchange mechanism and show that the resulting method enables
transitions between disconnected components while preserving the exact target
probability measure $\mu$ defined in equation \eqref{eq:target_measure}.

 \section{Method}\label{sec4}

We begin by recalling the notion of a tubular neighborhood for a smooth implicit
manifold $\mathcal{M}$.
For any point $q \in \mathcal{M}$, the normal space at $q$ is defined by
\(
N_q \mathcal{M}
:= \operatorname{Range}\bigl(\nabla \xi(q)\bigr)
= \{ \nabla \xi(q) v : v \in \mathbb{R}^m \}.
\)
The normal bundle of $\mathcal{M}$ is then
\(
N \mathcal{M}
:= \{ (q, v) \in \mathbb{R}^{n+m} : \xi(q) = 0,\nabla\xi(q)v \in N_q \mathcal{M} \}.
\)
A fundamental geometric result guarantees the existence of local coordinates
around $\mathcal{M}$ obtained by projecting nearby points along normal directions.
We recall the classical local tubular neighborhood theorem \citep{J19} for smooth embedded
manifolds in the following Theorem \ref{thm:tubular_local}.
\begin{theorem}[Local tubular neighborhood]
\label{thm:tubular_local}
For every $q \in \mathcal{M}$, there exists an open neighborhood
$U_q \subset \mathbb{R}^n$ of $q$ and a smooth projection mapping
\(
P : U_q \to \mathcal{M}
\)
such that:
\begin{enumerate}
    \item[(i)] $P(x) = x$ for all $x \in U_q \cap \mathcal{M}$;
    \item[(ii)] for every $x \in U_q$, $P(x)$ is the unique point in $\mathcal{M}$ minimizing
    $\|x - y\|$ over $y \in \mathcal{M}$;
    \item[(iii)]  The mapping
\[
x \longmapsto \bigl(P(x), v(x)\bigr),
\quad s.t. \quad x=P(x)+\nabla\xi\bigl(P(x)\bigr)v(x) 
,
\]
    defines a diffeomorphism between $U$ and an open subset of the normal bundle
    $N \mathcal{M}$.
\end{enumerate}
In particular, the projection $P$ is uniquely defined and smooth on $U$.
\end{theorem}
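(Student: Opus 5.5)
The plan is to apply the inverse function theorem to the normal exponential map. Define
\[
F:\ N\mathcal{M}\to\mathbb{R}^n,\qquad F(y,v)=y+\nabla\xi(y)\,v .
\]
By Assumption~\ref{ass:xi}(ii), $\nabla\xi(y)$ has full column rank for $y\in\mathcal{M}$. Hence $N\mathcal{M}\cong\{(y,v):\xi(y)=0,\ v\in\mathbb{R}^m\}$ is a smooth $n$-dimensional manifold, namely $\mathcal{M}\times\mathbb{R}^m$ locally, since $\dim\mathcal{M}=n-m$. The map $F$ is smooth on it by Assumption~\ref{ass:xi}(i).

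The first step is to check that $dF$ is invertible at the zero section $(q,0)$. For tangent vectors $(\dot y,\dot v)\in T_q\mathcal{M}\times\mathbb{R}^m$ we get
\[
dF_{(q,0)}(\dot y,\dot v)=\dot y+\nabla\xi(q)\dot v,
\]
because the term $\nabla^2\xi\cdot v$ vanishes at $v=0$. Since $T_q\mathcal{M}=\ker\nabla\xi(q)^\top$ and $\operatorname{Range}\nabla\xi(q)$ are orthogonal complements in $\mathbb{R}^n$, this linear map is bijective. The inverse function theorem then gives neighborhoods $W\ni(q,0)$ in $N\mathcal{M}$ and $U_q\ni q$ in $\mathbb{R}^n$ such that $F:W\to U_q$ is a diffeomorphism. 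Set $(P(x),v(x))=F^{-1}(x)$. This yields (iii) and the smoothness of $P$. Property (i) follows because for $x\in U_q\cap\mathcal{M}$ we have $F(x,0)=x$; after shrinking $W$ to a product $\{(y,v):y\in\mathcal{M}\cap B(q,\delta),\ |v|<\delta\}$ we also have $(x,0)\in W$, so injectivity gives $P(x)=x$.

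The main obstacle is (ii), because the inverse function theorem is purely local in $W$. A distant point of $\mathcal{M}$, possibly on another connected component, could still be closer to $x$, or a competing foot point could lie outside $W$. I would handle this in two steps, shrinking $U_q$ each time.

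First, I would show that the nearest point exists and stays near $q$. Take $U_q\subset B(q,r/3)$. Then every $x\in U_q$ has $\operatorname{dist}(x,\mathcal{M})\le |x-q|<r/3$. Since $\mathcal{M}$ is closed, being the preimage of $0$ under a continuous map, a minimizer $y^*$ exists, and it satisfies $|y^*-q|\le 2|x-q|<2r/3$. So every minimizer lies in $B(q,r)$, where $r$ is chosen so that $\mathcal{M}\cap B(q,r)$ lies in the $y$-projection of $W$.

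Second, I would identify the minimizer with $P(x)$. The first-order optimality condition for minimizing $\|x-y\|^2$ over $\mathcal{M}$ gives $x-y^*\perp T_{y^*}\mathcal{M}$, so $x-y^*=\nabla\xi(y^*)w$ for some $w$. Assumption~\ref{ass:xi}(ii) gives
\[
|w|\le c^{-1/2}\,|x-y^*|<c^{-1/2}\,r .
\]
Hence, for $r$ small, $(y^*,w)\in W$ and $F(y^*,w)=x$. Injectivity of $F$ on $W$ forces $(y^*,w)=(P(x),v(x))$. This proves that the minimizer is unique and equals $P(x)$, which is (ii).

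The point of care is choosing $r$ relative to $W$ consistently: $W$ must contain the full product set $(\mathcal{M}\cap B(q,r))\times\{|w|<c^{-1/2}r\}$. I would first fix $\delta$ from the inverse function theorem and then take $r\le\min(\delta,\ c^{1/2}\delta)$. The Hessian bound in Assumption~\ref{ass:xi}(iii) is not needed for this local statement.
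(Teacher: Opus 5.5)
Your proof is correct. The paper gives no proof of this theorem; it quotes it as a classical result. The closest argument in the paper is the proof of Proposition~\ref{prop:maximal_tube}, and your first step matches it: you differentiate $\Phi^{-1}(q,v)=q+\nabla\xi(q)v$ at the zero section, use $T_q\mathcal{M}\perp N_q\mathcal{M}$ to get an isomorphism, and apply the inverse function theorem. For the nearest-point property, however, the paper appeals to a positive-reach bound derived from curvature via Federer's result. That route uses Assumption~\ref{ass:xi}(iii), and it actually invokes this very theorem.

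Your argument for (ii) is more elementary and self-contained:
\begin{enumerate}
  \item The estimate $|y^*-q|\le 2|x-q|$ forces every global minimizer into the product chart, including one lying on a different connected component.
  \item The first-order condition gives $x-y^*=\nabla\xi(y^*)w$ with $|w|\le c^{-1/2}|x-y^*|$.
  \item Injectivity of $F$ on $W$ then identifies $y^*$ with $P(x)$.
\end{enumerate}
This deals directly with competing nearest points on other components, which a curvature bound by itself does not control. You are also right that Assumption~\ref{ass:xi}(iii) is not needed for the local statement.

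One small ordering point concerns (i). If you only set $U_q=F(W')$, the set $U_q$ need not lie in $B(q,\delta)$, so $(x,0)\in W'$ is not yet automatic at that stage. It becomes automatic once you take $U_q:=F(W')\cap B(q,r/3)$ with $r\le\delta$; alternatively, (i) follows immediately from (ii). With that choice, (iii) holds as a diffeomorphism between $U_q$ and the open set $(F|_{W'})^{-1}(U_q)$.
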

By Theorem \ref{thm:tubular_local}, every point sufficiently close to $\mathcal{M}$ admits a unique
decomposition into a base point on the manifold and a normal displacement.
Within a tubular neighborhood $U_q$, the projection $q_x = P(x)$ can be characterized
as the solution of the nonlinear system
\begin{equation}
\label{eq:projection_system}
\begin{cases}
x  = q_x + \nabla \xi(q_x) v_x, \\
\xi(q_x) = 0,
\end{cases}
\end{equation}
with unknowns $(q_x,v_x) \in \mathbb{R}^{n+m}$.
Since $\nabla\xi(q)$ has full rank $m$ for all $q\in\mathcal M$, 
the Jacobian of system \eqref{eq:projection_system} with respect to $(q_x,v_x)$ 
is nonsingular at any solution. By the implicit function theorem, 
for all $x$ sufficiently close to $\mathcal M$, 
there exists a unique solution $(q_x,v_x)$ depending smoothly on $x$.
Standard numerical methods, such as Newton's method \citep{J20},  
can therefore be applied to solve the nonlinear system 
\eqref{eq:projection_system}. To formally characterize the set of ambient points that admit such a unique
normal decomposition, we introduce the following global tubular domain.
\begin{proposition}[Uniform global tubular domain]
\label{prop:maximal_tube}
Let $\mathcal{M} = \{q \in \mathbb{R}^n : \xi(q)=0\}$ be an implicit manifold whose constraint function satisfies Assumption \ref{ass:xi}. Then, there exists a uniform tubular radius $\tau > 0$ such that the global tubular domain, defined as
$$
\mathcal{T}_\tau(\mathcal{M}) := \{x \in \mathbb{R}^n : \mathrm{dist}(x, \mathcal{M}) < \tau\}
$$
satisfies the following properties:
\begin{enumerate}
    \item[(i)] $\mathcal{T}_\tau(\mathcal{M})$ is open in $\mathbb{R}^n$ and therefore Lebesgue measurable.
    \item[(ii)] There exists a unique smooth projection 
    $$
    P: \mathcal{T}_\tau(\mathcal{M}) \to \mathcal{M}
    $$ 
    such that for every $x \in \mathcal{T}_\tau(\mathcal{M})$, $P(x)$ is the unique nearest point to $x$ in $\mathcal{M}$.
    \item[(iii)] Every $x \in \mathcal{T}_\tau(\mathcal{M})$ admits a unique normal bundle decomposition \eqref{eq:projection_system}, and the mapping
    $$
    \Phi(x) = (q_x, v_x), \quad \text{s.t.} \quad x = q_x + \nabla\xi(q_x)v_x
    $$
    is a diffeomorphism between $\mathcal{T}_\tau(\mathcal{M})$ and the open $\tau$-neighborhood of the zero section in the normal bundle $N\mathcal{M}$.
\end{enumerate}
\end{proposition}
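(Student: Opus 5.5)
The plan is to derive all three properties from one quantitative fact: a positive lower bound on the \emph{reach} of $\mathcal{M}$, i.e.\ a $\tau>0$ such that every $x$ with $\mathrm{dist}(x,\mathcal{M})<\tau$ has a unique nearest point on $\mathcal{M}$. Property (i) is immediate and does not need this: $x\mapsto \mathrm{dist}(x,\mathcal{M})$ is $1$-Lipschitz, so $\mathcal{T}_\tau(\mathcal{M})$ is the preimage of the open set $(-\infty,\tau)$, hence open and Borel.

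For (ii) and (iii) I will work with the normal map
\[
F(q,v)=q+\nabla\xi(q)v \quad\text{on } N\mathcal{M}.
\]
The argument splits into a local step and a global step.

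\textbf{Local step (uniform local invertibility).} The differential of $F$ at $(q,v)$ acts on tangent directions $(\dot q,\dot v)$, with $\dot q\in T_q\mathcal{M}$, as
\[
\dot q+\textstyle\sum_i v_i\nabla^2\xi_i(q)\dot q+\nabla\xi(q)\dot v .
\]
At $v=0$ this is an isomorphism. Assumption~\ref{ass:xi}(ii) gives $\|(\nabla\xi^\top\nabla\xi)^{-1}\|\le c^{-1}$, and Assumption~\ref{ass:xi}(iii) gives $\|\sum_i v_i\nabla^2\xi_i\|\le C\sqrt m\,|v|$. A Neumann-series argument then shows $dF$ stays invertible, with uniformly bounded inverse, whenever $|\nabla\xi(q)v|<\tau_1$. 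Here $\tau_1$ depends only on $c$, $C$, $m$, and the constraint $\xi(q)=0$ is what lets us measure the size of the normal displacement in ambient norm. Combining this with a quantitative inverse function theorem, whose constants are controlled by the same uniform bounds, shows that $F$ is injective on each set $\{(q',v') : |q'-q|<r,\ |\nabla\xi(q')v'|<\tau_1\}$, with $r>0$ uniform in $q$. This is the uniform version of Theorem~\ref{thm:tubular_local}: every local tube $U_q$ can be taken to contain a ball of fixed radius around each point of $\mathcal{M}$.

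\textbf{Global step (no two distant sheets are close).} Suppose $F(q_1,v_1)=F(q_2,v_2)=x$ with both normal displacements of norm $<\tau$. Then $|q_1-q_2|<2\tau$. If $2\tau<r$, the local step forces $(q_1,v_1)=(q_2,v_2)$. So the remaining task is to exclude pairs of points of $\mathcal{M}$ that are far apart intrinsically (or lie on different components) yet are within $2\tau$ of each other in $\mathbb{R}^n$.

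This is the step I expect to be the main obstacle. Uniform curvature bounds alone do not rule out, for instance, two components approaching each other asymptotically. I would first try compactness: by Assumption~\ref{ass:potential}(iii) the relevant region is effectively bounded. If $\mathcal{M}$, or the part of it carrying the mass of $\mu$, is compact, then the function
\[
(q_1,q_2)\mapsto |q_1-q_2| \quad\text{on } \{(q_1,q_2)\in\mathcal{M}^2 : |q_1-q_2|\ge r/2 \text{ intrinsically separated}\}
\]
attains a positive minimum $\delta$, and we take $\tau<\min(\tau_1,r/2,\delta/2)$. If $\mathcal{M}$ is unbounded, I would make explicit a uniform separation condition, namely that distinct sheets stay at distance at least $\delta>0$, which is exactly positive reach. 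I would state it as implicit in the intended use of the proposition.

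\textbf{Conclusion.} Given injectivity of $F$ on the $\tau$-neighborhood of the zero section, the remaining claims follow in order.
\begin{enumerate}
\item[(a)] $F$ is a bijection onto $\mathcal{T}_\tau(\mathcal{M})$. It is onto because a nearest point $q$ to $x$ exists, since $\mathcal{M}$ is closed as the zero set of a continuous map. The first-order condition makes $x-q$ normal, and $|x-q|<\tau$.
\item[(b)] $F$ is a diffeomorphism onto its image, by the local step.
\item[(c)] Any nearest point yields a preimage under $F$, so injectivity gives uniqueness of the nearest point. Hence $P=\pi\circ F^{-1}$, where $\pi$ is the base-point projection, and $P$ is smooth. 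This gives (ii), and $\Phi=F^{-1}$ gives (iii).
\end{enumerate}
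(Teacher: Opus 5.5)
Your differential estimate is the same as the paper's: a uniform lower bound on the singular values of $dF$ near the zero section, obtained from Assumption~\ref{ass:xi}(ii)--(iii). The two routes split at global injectivity. There your diagnosis is right and the paper's argument is not.

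The paper bounds the principal curvatures by $C\sqrt m/\sqrt c$ and cites Federer's Theorem~4.18 for $\mathrm{reach}(\mathcal M)\ge 1/\kappa_{\max}$. That theorem is a two-point criterion: $\mathrm{reach}(\mathcal M)\ge r$ if and only if $\mathrm{dist}(b-a,T_a\mathcal M)\le |b-a|^2/(2r)$ for all $a,b\in\mathcal M$. A curvature bound does not give this.

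In fact the proposition is false under Assumption~\ref{ass:xi} as stated. In $\mathbb R^2$ let $h(x)=(1+x^2)^{-1}$, $g(s)=s(1-s)e^{s(1-s)}$ and $\xi(x,y)=h(x)\,g\bigl(y/h(x)\bigr)$. Then:
\begin{itemize}
\item $\xi$ is smooth and vanishes exactly on $\{y=0\}\cup\{y=h(x)\}$;
\item $\partial_y\xi=g'(y/h)=\pm1$ on $\mathcal M$;
\item because $g''(0)=g''(1)=0$, the Hessian on $\mathcal M$ is $0$ on the lower curve and $\mathrm{diag}(h''(x),0)$ on the upper one, so it is bounded.
\end{itemize}
Yet the two components are at distance at most $h(x)\to0$. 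Hence for every $\tau>0$ some point of $\mathcal T_\tau(\mathcal M)$ has two nearest points, and (ii)--(iii) fail.

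Wrapping the same construction around a circle gives two nested closed curves with minimal gap $\delta$. This keeps $c$ and $C$ bounded independently of $\delta$ while $\mathrm{reach}(\mathcal M)\le\delta/2$. So even for compact $\mathcal M$, the paper's explicit $\tau$ (which depends only on $c,C,m$) is wrong. Your proposal therefore cannot be completed for unbounded $\mathcal M$, but that reflects a defect in the statement, not in your plan.

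Put the extra hypothesis into the statement rather than calling it implicit.
\begin{itemize}
\item Your compactness argument is fine. Pairs at intrinsic distance at least $r/2$, or on different components, form a closed and hence compact subset of $\mathcal M\times\mathcal M$, and $|q_1-q_2|>0$ there.
\item Drop the appeal to Assumption~\ref{ass:potential}(iii). It is not a hypothesis of the proposition, and integrability of $e^{-V}$ says nothing about the geometry of $\mathcal M$ at infinity.
\item A repair that recovers the paper's constant: require the Hessian bound of Assumption~\ref{ass:xi}(iii) on all of $\mathbb R^n$, or on every segment joining two points of $\mathcal M$. Taylor's theorem at $a\in\mathcal M$ then gives $|\nabla\xi(a)^\top(b-a)|\le\tfrac12 C\sqrt m\,|b-a|^2$. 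The normal component of $b-a$ is $\nabla\xi(a)G(a)^{-1}\nabla\xi(a)^\top(b-a)$, and $\|\nabla\xi(a)G(a)^{-1}\|\le c^{-1/2}$. Federer's criterion then yields $\mathrm{reach}(\mathcal M)\ge\sqrt c/(C\sqrt m)$, which is exactly the paper's second term. After that your steps (a)--(c) go through.
\end{itemize}
The example shows why a bound on $\mathcal M$ alone is not enough: there $\partial_{yy}\xi=g''(s)/h(x)$ blows up between the two curves.

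Three smaller corrections to your write-up.
\begin{enumerate}
\item Your local step asserts injectivity of $F$ on the extrinsic set $\{|q'-q|<r\}$ with $r$ uniform. That already contains the global statement and fails in the example above. State it for the sheet of $\mathcal M$ through $q$ at intrinsic distance less than $r$, which is how your global step actually uses it.
\item A quantitative inverse function theorem on a neighbourhood of uniform size needs a modulus of continuity for $dF$. That involves $\nabla^3\xi$, which Assumption~\ref{ass:xi} does not control. A direct argument using only the second-fundamental-form bound avoids this: along a sheet, chords are comparable to arc length and normal spaces rotate at rate at most $\kappa_{\max}$.
\item Measuring the normal displacement by $|\nabla\xi(q)v|$, as you do, is the correct normalization. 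The paper's $N^\tau\mathcal M=\{\|v\|<\tau\}$ maps onto $\mathcal T_\tau(\mathcal M)$ only when $\nabla\xi(q)$ is an isometry.
\end{enumerate}
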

\begin{proof}
Since $\nabla\xi(q) \in \mathbb{R}^{n \times m}$ with $\nabla\xi(q) = [\nabla\xi_1(q), \ldots, \nabla\xi_m(q)]$ by
Assumption~\ref{ass:xi}(i), the normal space at $q \in \mathcal{M}$ is
$N_q\mathcal{M} = \mathrm{Im}(\nabla\xi(q)) \subset \mathbb{R}^n$, which has dimension $m$ since
$\nabla\xi(q)$ has full column rank by Assumption~\ref{ass:xi}(ii). The condition
$\nabla\xi(q)^\top \nabla\xi(q) \succeq c I_m$ implies $\sigma_{\min}(\nabla\xi(q)) \geq \sqrt{c} > 0$,
so $\nabla\xi(q) : \mathbb{R}^m \to \mathbb{R}^n$ is injective. The cotangent space
$T_q^*\mathcal{M}$ satisfies $T_q^*\mathcal{M} \perp N_q\mathcal{M}$
and $\mathbb{R}^n = T_q^*\mathcal{M} \oplus N_q\mathcal{M}$ orthogonally. Consider the inverse map
$\Phi^{-1} : N\mathcal{M} \to \mathbb{R}^n$ by
\begin{equation}\nonumber
    \Phi^{-1}(q,v) := q + \nabla\xi(q)v,
\end{equation}
where $v \in \mathbb{R}^m$ parametrizes the normal displacement $\nabla\xi(q)v \in N_q\mathcal{M}$. For
$(\delta q, \delta v) \in T_q^*\mathcal{M} \times \mathbb{R}^m$, differentiating along a curve
$(q(t), v(t))$ with $(q(0),v(0)) = (q,v)$ and $(\dot{q}(0), \dot{v}(0)) = (\delta q, \delta v)$
and applying the product rule gives
\begin{equation}\label{eq:dphi-1}
    D\Phi^{-1}(q,v)(\delta q, \delta v)
    = \bigl(I_n + E(q,v)\bigr)\delta q + \nabla\xi(q)\,\delta v,
    \qquad
    E(q,v) := \sum_{i=1}^m v_i\,\nabla^2\xi_i(q).
\end{equation}
At $v = 0$ this reduces to $D\Phi^{-1}(q,0)(\delta q, \delta v) = \delta q + \nabla\xi(q)\delta v$.
This map is injective. Suppose that $\delta q + \nabla\xi(q)\delta v = 0$, because $\delta q \in T_q^*\mathcal{M}$ and the image of $\nabla\xi(q)$ spans the normal space $N_q\mathcal{M}$, these two components belong to orthogonal subspaces of the ambient space. Projecting the equation onto $T_q^*\mathcal{M}$ and $N_q\mathcal{M}$ separately yields $\delta q = 0$ and $\nabla\xi(q)\delta v = 0$. By the injectivity of $\nabla\xi(q)$, it follows that $\delta v = 0$. Thus, the differential is injective at $v=0$. Since the
domain $T_q^*\mathcal{M} \times \mathbb{R}^m$ has dimension $(n-m) + m = n$, this is an isomorphism.
For any $(\delta q, \delta v)$ with $\|(\delta q,\delta v)\| = 1$, the orthogonality
$\delta q \perp \nabla\xi(q)\delta v$ gives
\begin{equation}\nonumber
    \|D\Phi^{-1}(q,0)(\delta q,\delta v)\|^2
    = \|\delta q\|^2 + \|\nabla\xi(q)\delta v\|^2
    \geq \|\delta q\|^2 + c\|\delta v\|^2
    \geq \min(1,c),
\end{equation}
so $\sigma_{\min}(D\Phi^{-1}(q,0)) \geq \sqrt{\min(1,c)} > 0$ uniformly in $q \in \mathcal{M}$.
For $v \neq 0$, Assumption~\ref{ass:xi}(iii) and the Cauchy--Schwarz inequality give
$\|E(q,v)\| \leq C\sqrt{m}\,\|v\|$, so
\begin{equation}\nonumber
    \|D\Phi^{-1}(q,v)(\delta q,\delta v)\|
    \geq \bigl(\sqrt{\min(1,c)} - C\sqrt{m}\,\|v\|\bigr)\|(\delta q,\delta v)\|.
\end{equation}
Setting
\begin{equation}\nonumber
    \tau_1 := \frac{\sqrt{\min(1,c)}}{2C\sqrt{m}} > 0,
\end{equation}
we obtain $\sigma_{\min}(D\Phi^{-1}(q,v)) \geq \frac{\sqrt{\min(1,c)}}{2} > 0$ for all
$q \in \mathcal{M}$ and $\|v\| \leq \tau_1$, uniformly in $q$. The Inverse Function Theorem
therefore guarantees that $\Phi^{-1}$ is a smooth local diffeomorphism on $N^{\tau_1}\mathcal{M}:=\{(q,v)\in N\mathcal{M}: \Vert v\Vert<\tau_1\}$.
To promote this to a global diffeomorphism, we establish a positive lower bound on the maximal tubular neighborhood radius. For a unit vector $\hat{v} \in \mathbb{R}^m$, define the scalar function $(\hat{v}\cdot\xi)(q) := \sum_{i=1}^m \hat{v}_i\xi_i(q)$. From standard submanifold geometry \citep[Chapter 8]{Lee2018}, the principal curvatures $\kappa_j(q)$ of the implicitly defined manifold $\mathcal{M}$ in the unit normal direction $\nu = \nabla\xi(q)\hat{v}/\|\nabla\xi(q)\hat{v}\|$ are bounded by the spectral norm of the Hessian of the constraint function divided by the magnitude of the normal gradient. By Assumptions~\ref{ass:xi}(ii)--(iii), we have
\begin{equation}\nonumber
    |\kappa_j(q)|
    \leq \frac{\|\nabla^2(\hat{v}\cdot\xi)(q)\|}{\sigma_{\min}(\nabla\xi(q))}
    \leq \frac{C\sqrt{m}}{\sqrt{c}}.
\end{equation}
This establishes a uniform maximum principal curvature $\kappa_{\max} := C\sqrt{m}/\sqrt{c} < \infty$. By a classical result of Federer \cite[Theorem~4.18]{Federer1959}, this uniform curvature bound ensures the reach of the manifold is strictly positive
\begin{equation}\nonumber
\mathrm{reach}(\mathcal{M}) \geq \frac{1}{\kappa_{\max}} = \frac{\sqrt{c}}{C\sqrt{m}} > 0.
\end{equation}
By tubular neighborhood theorem \ref{thm:tubular_local}, for any $x$ with $\mathrm{dist}(x, \mathcal{M}) < \mathrm{reach}(\mathcal{M})$, the nearest point in $\mathcal{M}$ is unique, and the residual $x - P(x)$ lies entirely within the normal space $N_{P(x)}\mathcal{M}$. Setting the uniform tubular radius to
\begin{equation}\nonumber
    \tau := \min\!\left(\frac{\sqrt{\min(1,c)}}{2C\sqrt{m}},\; \frac{\sqrt{c}}{C\sqrt{m}}\right) > 0
\end{equation}
ensures the map $\Phi^{-1}$ is simultaneously a local diffeomorphism and globally injective on
$N^\tau\mathcal{M}$, hence a smooth embedding. Its image is exactly $\mathcal{T}_\tau(\mathcal{M})$.
For any $x \in \mathcal{T}_\tau(\mathcal{M})$, the nearest point $q_x = P(x)$ is well-defined, and the normal displacement can be written uniquely as
\[
x - q_x = \nabla \xi(q_x)\, v_x, \quad \|v_x\| < \tau,
\]
so $x = \Phi^{-1}(q_x, v_x)$. Conversely, for $(q,v)\in N^\tau\mathcal{M}$,
\[
\mathrm{dist}(\Phi^{-1}(q,v),\mathcal{M}) \le \|\nabla\xi(q)v\| < \tau,
\]
so $\Phi^{-1}(q,v)\in \mathcal{T}_\tau(\mathcal{M})$.
Define
$\Phi := (\Phi^{-1}|_{N^\tau\mathcal{M}})^{-1}$, properties~(i)--(iii) follow immediately.
$\mathcal{T}_\tau(\mathcal{M})$ is the image of an open set under a smooth embedding, hence open
and Lebesgue measurable. The smooth projection $P(x) = q_x$ is unique by the positive reach
argument, and $\Phi$ is a smooth diffeomorphism by construction, providing the explicit normal
bundle decomposition $x = q_x + \nabla\xi(q_x)v_x$ with uniformly controlled radius $\tau > 0$.
\end{proof}
As discussed in Section~\ref{sec3.3}, a central difficulty in sampling from
the target measure
\(
\mu(dq) \propto \exp\{-V(q)\}\,\sigma_{\mathcal M}(dq),
\)
is that it is supported on the embedded manifold $\mathcal M$,
which has zero Lebesgue measure in $\mathbb R^n$.
Moreover, when $\mathcal M$ consists of multiple disconnected components,
direct sampling methods may struggle to explore the full support.
Proposition~\ref{prop:maximal_tube} establishes a one-to-one
correspondence between ambient points in the tubular domain
$\mathcal T_\tau(\mathcal M)$ and pairs $(q,v)\in\mathbb R^{n+m}$ in the normal bundle.
Since $\mathcal T_\tau(\mathcal M)$ is an open subset of $\mathbb R^n$
with positive Lebesgue measure,
this suggests an equivalent sampling strategy:
instead of sampling directly on $\mathcal M$,
we sample in its tubular neighborhood and subsequently project onto the manifold.

\subsection{Tubular Relaxation}
We start by introducing the tubular relaxation of
the constrained target measure.
Rather than enforcing the hard constraint $\xi(x)=0$,
we allow the Markov chain to explore a neighborhood of the manifold
while penalizing deviations in the normal direction through a quadratic
regularization. We define
\begin{equation}
\label{eq:tubular_relaxation}
\pi_\varepsilon(dx)
\;\propto\;
\exp\!\left(
- V(x)
- \frac{1}{\varepsilon}\|\xi(x)\|^2
\right) dx,
\end{equation}
where $\varepsilon>0$ is a relaxation parameter.
The penalty term $\varepsilon^{-1}\|\xi(x)\|^2$
induces Gaussian concentration in the normal direction.
Under the normal bundle decomposition $\Phi(x)=(q_x,v_x)$,
where $x \in \mathcal{T}_\tau(\mathcal{M})$,
a second-order Taylor expansion of $\xi$ at $q_x \in \mathcal M$
yields
\(
\xi(x)
=
G(q_x) v_x
+
R(q_x,v_x),
\)
where $G(q_x):=\nabla \xi(q_x)^\top\nabla\xi(q_x)\in \mathbb{R}^{m\times m}$ is the Gram matrix and the remainder satisfies
$ \|R(q_x,v_x)\| \le C \|v_x\|^2 $.
Conditional on $q_x$, the dominant term in the exponent is quadratic in $v_x$,
and the normal component is approximately Gaussian as $\varepsilon$ approaches to 0. The expected squared normal displacement is of order $\varepsilon$,
and the typical normal distance from the manifold is of order
$\sqrt{\varepsilon}$. To formalize this observation, Theorem \ref{thm:tubular_convergence} shows that the relaxed distribution family converges to the target probability measure $\mu$ supported on $\mathcal{M}$ in probability. 

\begin{theorem}[Weak Convergence of $\pi_\epsilon$]
\label{thm:tubular_convergence}
Let $\xi: \mathbb{R}^n \to \mathbb{R}^m$ be a constraint function satisfying Assumption \ref{ass:xi}, and let $V: \mathbb{R}^n \to \mathbb{R}$ be a potential function satisfying Assumption \ref{ass:potential}. Let $\pi_\varepsilon$ be defined by equation \eqref{eq:tubular_relaxation}. Then, as $\varepsilon \to 0$,
\begin{itemize}
    \item[(i)] There exists $C > 0$ such that
    \(
    \mathbb{E}_{\pi_\varepsilon}\left[\|v_x\|^2\right] \le C\varepsilon,
    \)
    where $v_x$ denotes the normal component of $x\in\mathcal{T}_\tau(\mathcal{M})$ under the decomposition $x = \Phi(q_x, v_x)$ with $q_x = P(x)$.
    
    \item[(ii)] The family of probability measures $\{\pi_\epsilon\}_{\epsilon>0}$ converge weakly to $\mu$ in \eqref{eq:target_measure} as $\epsilon \rightarrow 0$.
\end{itemize}
\end{theorem}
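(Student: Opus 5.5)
The plan is to work entirely in the normal-bundle coordinates supplied by Proposition~\ref{prop:maximal_tube}, and to split $\mathbb{R}^n$ into the tube $\mathcal{T}_\tau(\mathcal{M})$ and its complement. On the complement I will show that $\pi_\varepsilon$ puts exponentially small mass. On the tube I will change variables $x=\Phi^{-1}(q,v)=q+\nabla\xi(q)v$ and apply a Laplace-type argument in $v$.

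\textbf{Step 1 (change of variables).} By \eqref{eq:dphi-1}, for $x=\Phi^{-1}(q,v)$ we have
\[
dx = J(q,v)\,\sigma_{\mathcal{M}}(dq)\,dv, \qquad J(q,v)=\det\!\bigl(G(q)\bigr)^{1/2}\bigl(1+O(\|v\|)\bigr),
\]
uniformly in $q$. The $O(\|v\|)$ term comes from $E(q,v)$, which is bounded by $C\sqrt m\|v\|$. The Taylor expansion stated before the theorem gives $\|\xi(x)\|^2=\|G(q)v\|^2+O(\|v\|^3)$. Combined with $G\succeq cI_m$, this yields $\|\xi(x)\|^2\ge \tfrac{c^2}{2}\|v\|^2$ for $\|v\|\le\tau'$ small, uniformly in $q$. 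Smoothness of $V$ together with Assumption~\ref{ass:potential}(iii) lets me write $V(x)=V(q)+O(\|v\|)$ locally uniformly, and the integrability of $e^{-V}$ controls the tails.

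\textbf{Step 2 (part (i) and the outside mass).} Outside the tube $\{\|v\|<\tau'\}$, the continuity of $\xi$ and the uniform rank condition give $\|\xi(x)\|\ge\delta>0$ on $\mathrm{supp}$-relevant compacts. Coercivity of $V$ handles the remaining region at infinity. Hence the unnormalized mass there is $O(e^{-\delta^2/\varepsilon})$. Inside the tube, substituting $v=\sqrt\varepsilon\,w$ shows:
\begin{itemize}
\item the normalizing constant is $Z_\varepsilon=\varepsilon^{m/2}\bigl(Z_0+o(1)\bigr)$ with $Z_0>0$;
\item the unnormalized second moment $\int\|v\|^2\,d(\cdot)$ is $\varepsilon^{m/2+1}\,O(1)$.
\end{itemize}
Dividing gives $\mathbb{E}_{\pi_\varepsilon}\|v_x\|^2\le C\varepsilon$. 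For points outside $\mathcal{T}_\tau$, where $v_x$ is undefined, I set $v_x:=0$ or use the cap $\tau$; this contributes only an exponentially small amount.

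\textbf{Step 3 (part (ii)).} For bounded continuous $f$, apply the same rescaling to $\int f\,d\pi_\varepsilon$. Dominated convergence in $(q,w)$, using the Gaussian majorant $e^{-c^2\|w\|^2/2}$ and $e^{-V}$ integrability, gives
\[
\int f\,d\pi_\varepsilon \;\longrightarrow\; \frac{\int_{\mathcal{M}} f(q)\,e^{-V(q)}\,\rho(q)\,\sigma_{\mathcal{M}}(dq)}{\int_{\mathcal{M}} e^{-V(q)}\,\rho(q)\,\sigma_{\mathcal{M}}(dq)},
\]
where
\[
\rho(q)=\det G(q)^{1/2}\int_{\mathbb{R}^m}e^{-\|G(q)w\|^2}\,dw = \pi^{m/2}\det G(q)^{-1/2}.
\]
The final step is to identify this limit with $\mu$ in \eqref{eq:target_measure}, which requires $\rho$ to be constant on $\mathcal{M}$.

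\textbf{Main obstacle.} This identification is the step I expect to fail in general. With the paper's normalization of the penalty and the Riemannian measure $\sigma_{\mathcal{M}}$, the limit of $\pi_\varepsilon$ as defined is the reweighted measure $\propto e^{-V}\det G^{-1/2}\,\sigma_{\mathcal{M}}$. This coincides with $\mu$ exactly when $\det G$ is constant on $\mathcal{M}$; examples include linear or affine constraints, the motivating example $\xi(q)=q^2-1$, and more generally any constraint normalized so that $\|\nabla\xi\|$ is constant on $\mathcal{M}$.

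What I would try first is to check whether the intended reading of $\sigma_{\mathcal{M}}$ or of the penalty already absorbs this factor. If it does not, I cannot close Step~3 to reach $\mu$ as stated without an additional hypothesis, such as $\det G$ constant on $\mathcal{M}$, or a modification of $\pi_\varepsilon$, for example including the factor $\det G(P(x))^{1/2}$ in its density. Steps~1--2, and hence part~(i), go through as written under Assumptions~\ref{ass:xi}--\ref{ass:potential}.
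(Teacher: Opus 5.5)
Your Step~3 diagnosis is correct, and rereading $\sigma_{\mathcal M}$ or the penalty does not rescue it. With $\sigma_{\mathcal M}$ the Euclidean surface measure and $\pi_\varepsilon$ exactly as in \eqref{eq:tubular_relaxation}, part~(ii) is false in general. The paper's proof follows your route step for step: normal-bundle coordinates, tube/complement split, $v=\sqrt{\varepsilon}\,w$, dominated convergence, and $J(q,0)=\det G(q)^{1/2}$ against the Gaussian factor $\pi^{m/2}\det G(q)^{-1}$. It reaches exactly your weighted limit: its $Z_0$ is $\int_{\mathcal M}e^{-V}(\det G)^{-1/2}\,d\sigma_{\mathcal M}$, and its numerator carries the same weight. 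It then asserts without justification that the ratio equals $\int_{\mathcal M}\varphi\,d\mu$, which is precisely the step you isolated.

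Here is an explicit counterexample. Take $n=2$, $m=1$, $\xi(q)=q_2(2+\sin q_1)$, $V(q)=\|q\|^2$. Both assumptions hold: $\mathcal M$ is the $q_1$-axis, $\|\nabla\xi\|=2+\sin q_1\in[1,3]$ there, and the Hessian on $\mathcal M$ has norm $|\cos q_1|\le 1$. The target is $\mu\propto e^{-q_1^2}\,dq_1$ on the axis. Integrating out $q_2$ exactly gives
\[
\int_{\mathbb R}\exp\Bigl(-q_2^2-\frac{q_2^2\,(2+\sin q_1)^2}{\varepsilon}\Bigr)\,dq_2=\sqrt{\pi\varepsilon}\,\bigl(\varepsilon+(2+\sin q_1)^2\bigr)^{-1/2},
\]
and the conditional law of $q_2$ is centered Gaussian with variance at most $\varepsilon/2$. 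Hence $\pi_\varepsilon\Rightarrow\nu\otimes\delta_0$ with $\nu(dq_1)\propto e^{-q_1^2}(2+\sin q_1)^{-1}\,dq_1$, which is not $\mu$.

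What your argument actually proves is $\pi_\varepsilon\Rightarrow\mu^{\xi}\propto e^{-V}(\det G)^{-1/2}\,\sigma_{\mathcal M}$, the co-area measure $e^{-V}\delta(\xi(x))\,dx$. This coincides with $\mu$ iff $\det G$ is constant on $\mathcal M$. To obtain $\mu$ itself, multiply the density of $\pi_\varepsilon$ by $\det\bigl(\nabla\xi(x)^\top\nabla\xi(x)\bigr)^{1/2}$, which is a globally defined version of your $\det G(P(x))^{1/2}$.

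Your Step~2 has a separate gap, which the paper shares: it simply asserts $\|\xi\|^2\ge\alpha>0$ on $\mathcal T_\tau(\mathcal M)^c$. Assumption~\ref{ass:xi} constrains $\xi$ only on $\mathcal M$, so $\|\xi\|$ may tend to $0$ far from $\mathcal M$. Coercivity of $V$ only makes $\int_{\|x\|>R}e^{-V}$ small for fixed $R$; it does not make it $o(\varepsilon^{m/2})$. So ``hence $O(e^{-\delta^2/\varepsilon})$'' does not follow.

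For a counterexample, take $n=m=1$, $\xi(x)=(x^2-1)e^{-x^4}$, $V(x)=x^2$. Here $|\xi'(\pm1)|=2/e$, so the weighting issue is absent. The unnormalized mass of $\{|x|\le K\}$ is $O(\sqrt{\varepsilon})$ for every fixed $K$. On $\{|x|\ge(\log(1/\varepsilon))^{1/4}\}$, however, one has $\xi^2\le\varepsilon$. That region therefore carries mass at least of order $\exp(-\sqrt{\log(1/\varepsilon)})/(\log(1/\varepsilon))^{1/4}\gg\sqrt{\varepsilon}$. Thus $\pi_\varepsilon(\{|x|\le K\})\to 0$ for every $K$: the family is not tight and has no weak limit.

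So even the corrected version of (ii) needs an extra hypothesis, such as $\inf_{x\notin\mathcal T_\tau(\mathcal M)}\|\xi(x)\|>0$. If $\mathcal M$ is noncompact you also need two more things. First, derivative bounds on $\xi$ and $V$ in a neighbourhood of $\mathcal M$, since your $O(\|v\|^2)$ remainder and $V(x)=V(q)+O(\|v\|)$ use values off $\mathcal M$. Second, $\int_{\mathcal M}e^{-V}\,d\sigma_{\mathcal M}<\infty$, which does not follow from $\int_{\mathbb R^n}e^{-V}dx<\infty$.

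Once these local bounds hold, part~(i) is unaffected, provided you keep the convention $v_x:=0$ off the tube. Then $Z_\varepsilon$ is bounded below by the tube contribution, which is of exact order $\varepsilon^{m/2}$, so exterior mass can only decrease the ratio. Drop the ``cap at $\tau$'' alternative, which fails in the example just given.
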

\begin{proof}
We first prove part (i). Let $\mathcal{T}_\tau(\mathcal{M})$ denote the global tubular neighborhood of $\mathcal{M}$, and let $\Phi$ be the normal bundle diffeomorphism given in Proposition \ref{prop:maximal_tube}, with inverse
\[
\Phi^{-1}(q,v) = q + \nabla \xi(q)\, v.
\]
Every $x \in \mathcal{T}_\tau(\mathcal{M})$ admits a unique decomposition $x = \Phi^{-1}(q_x,v_x)$, where $q_x = P(x) \in \mathcal{M}$ and $\nabla \xi (q_x)v_x \in N_{q_x}\mathcal{M}$.
Since $q_x \in \mathcal{M}$ satisfies $\xi(q_x)=0$, a Taylor expansion of $\xi$ at $q_x$ gives
\[
\xi(x)
=
\xi(q_x) + 
\nabla\xi(q_x)^\top\,(x-q_x)
+
R(q_x,v_x),
\]
where the remainder satisfies $\|R(q_x,v_x)\| \le C \|v_x\|^2$ for some constant $C>0$. Substituting $x-q_x = \nabla \xi(q_x)v_x$, we obtain
\[
\xi(x) = G(q_x) v_x + O(\|v_x\|^2).
\]
Thus, $\|\xi(x)\|^2 = v_x^\top G(q)^2 v_x + O(\|v_x\|^3)$. We first partition the integral for the normalizing constant $Z_\varepsilon$ defined by 
$$Z_\varepsilon := \int_{\mathcal{T}_\tau(\mathcal{M})} \exp\left(-V(x) - \frac{1}{\varepsilon}\|\xi(x)\|^2\right) dx + \int_{\mathcal{T}_\tau(\mathcal{M})^c} \exp\left(-V(x) - \frac{1}{\varepsilon}\|\xi(x)\|^2\right) dx.$$
On $\mathcal{T}_\tau(\mathcal{M})^c$, we have $\|\xi(x)\|^2 \ge \alpha > 0$. By Assumption \ref{ass:xi}(iii), $\int_{\mathcal{T}_\tau(\mathcal{M})^c} e^{-V(x)}dx < \infty$, so the exterior integral is bounded by $O(e^{-\alpha/\varepsilon})$. For the integral over $\mathcal{T}_\tau(\mathcal{M})$, we apply the coordinate transformation and the rescaling $v = \sqrt{\varepsilon}w$

$$\int_{\mathcal{M}} \int_{\|v\| < \tau} \exp\left(-V(q) - \frac{1}{\varepsilon}\left(v^\top G(q)^2 v + O(\|v\|^3)\right)\right) |J(q,v)| \, dv \, d\sigma_{\mathcal{M}}(q)$$
$$= \varepsilon^{m/2} \int_{\mathcal{M}} \int_{\|w\| < \tau/\sqrt{\varepsilon}} \exp\left(-V(q) - w^\top G(q)^2 w + O(\sqrt{\varepsilon}\|w\|^3)\right) |J(q,\sqrt{\varepsilon}w)| \, dw \, d\sigma_{\mathcal{M}}(q).$$
By Assumption \ref{ass:xi} and the Dominated Convergence Theorem, as $\varepsilon \to 0$, this converges to
$$\varepsilon^{m/2} \int_{\mathcal{M}} e^{-V(q)} J(q,0) \left( \int_{\mathbb{R}^m} e^{-w^\top G(q)^2 w} dw \right) d\sigma_{\mathcal{M}}(q).$$
The standard Gaussian integral over $\mathbb{R}^m$ gives $$\int e^{-w^\top G(q)^2 w} dw = \pi^{m/2} (\det G(q)^2)^{-1/2} = \pi^{m/2} (\det G(q))^{-1}.$$ 
Substituting $J(q,0) = \det G(q)^{1/2}$, we obtain the determinant factor
$$Z_\varepsilon = \varepsilon^{m/2} \pi^{m/2} \int_{\mathcal{M}} e^{-V(q)} (\det G(q))^{-1/2} \, d\sigma_{\mathcal{M}}(q) + O(e^{-\alpha/\varepsilon}).$$
Define $Z_0 = \int_{\mathcal{M}} e^{-V(q)} (\det G(q))^{-1/2} \, d\sigma_{\mathcal{M}}(q)$, we establish the asymptotic relationship $Z_\varepsilon \sim \varepsilon^{m/2} \pi^{m/2} Z_0$.

We can now prove the theorem part (i). We partition the expectation integral over $\mathcal{T}_\tau(\mathcal{M})$ and $\mathcal{T}_\tau(\mathcal{M})^c$
$$\mathbb{E}_{\pi_\varepsilon}[\|v_x\|^2] = \frac{1}{Z_\varepsilon} \int_{\mathcal{T}_\tau(\mathcal{M})} \|v_x\|^2 e^{-V(x) - \frac{1}{\varepsilon}\|\xi(x)\|^2} dx + \frac{1}{Z_\varepsilon} \int_{\mathcal{T}_\tau(\mathcal{M})^c} \|v_x\|^2 e^{-V(x) - \frac{1}{\varepsilon}\|\xi(x)\|^2} dx.$$
By Assumption \ref{ass:xi}(ii), the tail integral on $\mathcal{T}_\tau(\mathcal{M})^c$ decays exponentially like $O(e^{-\alpha/\varepsilon})$. Since $Z_\varepsilon \sim \varepsilon^{m/2}$, the ratio vanishes exponentially. For the integral over $\mathcal{T}_\tau(\mathcal{M})$, substituting $v = \sqrt{\varepsilon}w$ introduces a factor of $\varepsilon$ from $\|v\|^2 = \varepsilon\|w\|^2$. The remaining integral evaluates asymptotically to a bounded constant, which completes the proof for $\mathbb{E}_{\pi_\varepsilon}[\|v_x\|^2] \le C\varepsilon$.

It remains to prove the theorem part (ii). Let $B_R$ be a closed ball of radius $R$. We bound the exterior measure
$$\pi_\varepsilon(B_R^c) = \frac{1}{Z_\varepsilon} \int_{B_R^c \setminus \mathcal{T}_\tau(\mathcal{M})} e^{-V(x) - \frac{1}{\varepsilon}\|\xi(x)\|^2} dx + \frac{1}{Z_\varepsilon} \int_{B_R^c \cap \mathcal{T}_\tau(\mathcal{M})} e^{-V(x) - \frac{1}{\varepsilon}\|\xi(x)\|^2} dx.$$
The first term vanishes uniformly as $\varepsilon \to 0$ because the numerator decays exponentially $O(e^{-\alpha/\varepsilon})$ while the denominator decays like $\varepsilon^{m/2}$. For the second term inside the tubular neighborhood, the rescaling $v = \sqrt{\varepsilon}w$ extracts a factor of $\varepsilon^{m/2}$, which cancels the leading order of $Z_\varepsilon$. As $\varepsilon \to 0$, this yields
$$\lim_{\varepsilon \to 0} \pi_\varepsilon(B_R^c \cap \mathcal{T}_\tau(\mathcal{M})) = \frac{1}{Z_0} \int_{\mathcal{M} \setminus B_R} e^{-V(q)} (\det G(q))^{-1/2} \, d\sigma_{\mathcal{M}}(q).$$
By Assumptions \ref{ass:xi}(iii) and \ref{ass:potential}(ii), the integrand $e^{-V(q)} (\det G(q))^{-1/2}$ is globally integrable over $\mathcal{M}$. Thus, the tail integral vanishes as $R \to \infty$. This proves the tightness
$\lim_{R \to \infty} \sup_{\varepsilon > 0} \pi_\varepsilon(B_R^c) = 0.$
By Prokhorov's Theorem \citep[Theorem 16.16]{billingsley2013}, tightness guarantees relative compactness. To identify the specific limit $\mu$, let $\forall \varphi \in C_c(\mathbb{R}^n)$ be a continuous test function with compact support $K$. For sufficiently small $\varepsilon$, we evaluate
$$\lim_{\varepsilon \to 0} \int_{\mathbb{R}^n} \varphi(x) \pi_\varepsilon(dx) = \lim_{\varepsilon \to 0} \frac{1}{Z_\varepsilon} \int_{\mathcal{T}_\tau(\mathcal{M}) \cap K} \varphi(x) e^{-V(x) - \frac{1}{\varepsilon}\|\xi(x)\|^2} dx.$$
Applying the same normal coordinate transformation and dominated convergence, the numerator evaluates asymptotically to
$$\varepsilon^{m/2} \pi^{m/2} \int_{\mathcal{M}} \varphi(q) e^{-V(q)} (\det G(q))^{-1/2} \, d\sigma_{\mathcal{M}}(q).$$
Dividing by $Z_\varepsilon \sim \varepsilon^{m/2} \pi^{m/2} Z_0$, the scaling factors cancel entirely, yielding:$$\lim_{\varepsilon \to 0} \int_{\mathbb{R}^n} \varphi(x) \pi_\varepsilon(dx) = \int_{\mathcal{M}} \varphi(q) \mu(dq)$$This confirms the weak limit is precisely $\mu$.

\end{proof}

As a result of Theorem \ref{thm:tubular_convergence}, the potential function
restricted to the implicit manifold can be approximated through invertible local
projections from the normal bundle, provided the relaxation parameter
$\varepsilon$ is chosen sufficiently small. The relaxed probability measure $\pi_\varepsilon$ is supported on a
connected subset of the ambient space and is absolutely continuous with respect
to Lebesgue measure.
This makes it substantially easier to sample using standard MCMC methods such as
HMC Algorithm \ref{alg:HMC}.
Building on this property, a replica exchange scheme with appropriately designed
exchange moves between samples in $\mathcal{T}_\tau(\mathcal{M})$ 
and their projections onto
$\mathcal{M}$ can be implemented to explore the target measure $\mu$ supported on disconnected implicit
manifold.
\subsection{Replica Exchange}\label{sec:pt}

Replica exchange, also known as parallel tempering MCMC, is a classical
framework for sampling from multimodal probability distributions supported on Euclidean spaces \citep{J15}. The key idea is to
run multiple Markov chains at different temperatures: low-temperature chains exploit high-probability regions, while high-temperature chains facilitate global exploration by flattening the potential landscape.
Occasional exchange moves between chains allow information to be exchanged across
temperatures while preserving detailed balance with respect to the joint target
distribution.

Sampling from disconnected implicit manifolds shares a superficial similarity
with this setting. Disconnected components of the constraint manifold
$\mathcal{M}$ may be heuristically interpreted as distinct modes of the target
distribution, with each mode corresponding to a smooth connected submanifold.
However, this analogy is limited.
In our setting, the target measure $\mu$ in equation \eqref{eq:target_measure} is supported on the
implicit manifold $\mathcal{M}$ and admits a density only with respect to the
induced Hausdorff measure on $\mathcal{M}$. By contrast, any relaxed or tempered
distribution has a density with respect to the ambient $n$-dimensional Lebesgue
measure, such as the tubular relaxation
$\pi_\epsilon$ defined in equation \eqref{eq:tubular_relaxation}. Although
$\pi_\epsilon$ weakly converges to $\mu$ as $\epsilon$ converges to 0, the measures $\mu$ and $\pi_\epsilon$ are mutually
singular for
any $\epsilon>0$. Consequently, exchange moves between cold and hot chains cannot be
constructed using the classical replica exchange acceptance rule, which
assumes absolute continuity of all involved measures with respect to a common Lebesgue measure. A carefully designed exchange mechanism is therefore required
to preserve detailed balance while accounting for the
singular absolutely continuous properties between $\mu$ and $\pi_\epsilon$.

Let $\{q^{(k)}\}_{k\ge0}$ denote the cold Markov chain with state space
$\mathcal{M}$ and invariant measure $\mu$, and let $\{x^{(k)}\}_{k\ge0}$ denote the
hot Markov chain with state space $\mathbb{R}^n$ and invariant measure
$\pi_\epsilon$. By Propositions~\ref{prop:maximal_tube}, there exist a global tubular neighborhood $\mathcal{T}_\tau(\mathcal{M})$ such that the
 the normal
bundle decomposition $\Phi$ is a diffeomorphism onto its image.
Assume that, at a proposed exchange time, the chain states satisfy
\(
q^{(k)}\in\mathcal{M},\)
\(x^{(k)}\in\mathcal{T}_\tau(\mathcal{M}).
\)
Then there exists a unique decomposition for $x^{(k)}$ on its normal bundle 
\begin{equation}\label{eq:phi}
    \phi(q^{(k)},x^{(k)})=(q^{(k)}, q_k, v_k), \quad s.t. \quad x^{(k)} = \Phi^{-1}(q_k,v_k) , 
\end{equation} 
where $q_k$ is the normal projection of $x^{(k)}$ onto $\mathcal{M}$, and coordinate transformation $\phi$ is a diffeomorphism on $\mathcal{M}\times\mathcal{T}_\tau(\mathcal{M})$.
We now construct a deterministic exchange proposal that exchanges the manifold
components while retaining the normal coordinate. Define
\(
\widetilde S(q^{(k)}, q_k, v_k) = (q_k, q^{(k)}, v_k),
\)
and let the full exchange mapping be
\begin{equation}\label{eq:exachange}
    S := \phi^{-1}\circ \widetilde S\circ \phi :
(q^{(k)},x^{(k)}) \longmapsto (q_k, x_k),
\end{equation}
where
\begin{equation}\label{eq:phiinve} 
\begin{aligned} \phi^{-1}(q_k, q^{(k)}, v_k) = (q_k, x_k), \quad s.t. \quad x_k = \Phi^{-1}(q^{(k)},v_k). 
\end{aligned} 
\end{equation}
The cold chain state is replaced by the projected point of the hot
chain, while the hot chain state is reconstructed using the same normal
coordinate relative to the original cold chain position. If, in addition,
$x_k\in\mathcal{T}_\tau(\mathcal{M})$, then the mapping $S$ is an involution on
$\mathcal{M}\times\mathcal{T}_\tau(\mathcal{M})$
\begin{equation}\label{eq:invo}
    S^2(q^{(k)},x^{(k)}) = (q^{(k)},x^{(k)}),
\end{equation}
which follows directly from the uniqueness of the tubular decomposition.
It now remains to determine when such a proposal $(q_k,x_k)$ should be accepted
so that the joint Markov chain admits the correct distribution
\begin{equation}\label{eq:pi}
    \Pi(dq,dx) = \mu(dq)\,\pi_\epsilon(dx)
\quad \text{on } \mathcal{M}\times\mathbb{R}^n.
\end{equation}
A first observation is that since the hot chain $\{x^{(k)}\}_{k\ge0}$ has
stationary distribution $\pi_\epsilon$ supported on the full ambient space
$\mathbb{R}^n$, there is a positive probability that
$x^{(k)}\notin\mathcal{T}_\tau(\mathcal{M})$, in which case the tubular decomposition $\phi$ in equation \eqref{eq:phi} is
not well defined. Moreover, even when $x^{(k)}\in\mathcal{T}_\tau(\mathcal{M})$, the
reconstructed point $x_k$ defined by equation \eqref{eq:phiinve} may lie
outside $\mathcal{T}_\tau(\mathcal{M})$, in which case the involution property of $S$
fails.
To address these issues, we employ a \emph{reversible involution check}. Specifically, an exchange proposal is immediately rejected unless both the current state $x^{(k)}$ and the proposed state $x_k$ admit numerical projections onto $\mathcal{M}$, and the involution condition \eqref{eq:invo} is satisfied within a prescribed numerical tolerance. This procedure ensures that accepted proposals lie in the domain where the exchange map $S$ is well-defined and numerically involutive. The resulting proposal kernel preserves reversibility with respect to the target distribution and avoids regions where the projection or involution property may fail. A summary process for such a reversible check is presented in the follow Algorithm \ref{check}.

\begin{algorithm}[htbp]
\caption{Reversible involution check for exchange proposal}\label{check}
\begin{algorithmic}[1]
\Require Current state $(q^{(k)},x^{(k)})$, maximum nonlinear iteration $N$, tolerance $\eta$
\State Attempt to evaluate $\Phi(x^{(k)})=(q_k,v_k)$ by solving \eqref{eq:projection_system}
\If{the nonlinear solver fails to converge within $N$ iterations}
    \State \Return Current sate $(q^{(k)}, x^{(k)})$
\EndIf
\State Evaluate the reconstructed point 
\(
x_k = \Phi^{-1}(q^{(k)}, v_k)
\)
by
\(
x_k = q^{(k)}+\nabla \xi(q^{(k)}) v_k
\)
\State Attempt to evaluate $\Phi(x_k) = (q',v')$ by solving \eqref{eq:projection_system} again
\If{the nonlinear solver fails within $N$ iterations or $\Vert q^{(k)}-q'\Vert \ge \eta$}
    \State \Return Current sate $(q^{(k)}, x^{(k)})$
\EndIf
\State \Return $(q_k,x_k)$
\end{algorithmic}
\end{algorithm}
We define the admissible domain $\mathcal{D}\subset \mathcal{M}\times \mathbb{R}^n$ as the set of all joint states that pass the reversible involution check in Algorithm~\ref{check}. Satisfying the check condition does not guarantee that the joint state lies in the global tubular neighborhood $\mathcal{T}_\tau(\mathcal{M})$. The converse inclusion holds, namely $\mathcal{T}_\tau(\mathcal{M})\subset \mathcal{D}$.
This is because $\mathcal{D}$ may include additional states for which the projection and involution conditions hold numerically. This is sufficient for the validity of the algorithm, since the resulting numerical proposal kernel, combined with the Metropolis--Hastings acceptance criterion, preserves the correct target measure.

It is straightforward to observe that the exchange mapping $S$ in equation \eqref{eq:exachange} is not measure-preserving. To ensure consistency with the joint target measure $\Pi$ in equation \eqref{eq:pi}, the change of variables formula must be taken into account, which requires the Jacobian determinant of the mapping $S$. In the following Theorem \ref{thm:jacobian_gram}, we derive a general expression for the Jacobian of $S$ and introduce a computationally efficient approximation that enables practical evaluation of this Jacobian term.
\begin{theorem}[Jacobian determinant of $S$]
\label{thm:jacobian_gram}
Let 
\(
\mathcal M = \{q\in\mathbb R^n : \xi(q)=0\}
\)
be an implicit manifold whose constraint function satisfies Assumption \ref{ass:xi}. Define the Gram matrix
\(
G(q) := \nabla\xi(q)^\top \nabla\xi(q) \in \mathbb{R}^{m\times m}.
\)
Let $(q,x)\in\mathcal D$ be an admissible state with decomposition
\(
x = q_x + \nabla\xi(q_x)v,
\)
where
\(
\xi(q_x)=0.
\)
Let $U(q)\in\mathbb{R}^{n\times (n-m)}$ have orthonormal columns forming a basis of the tangent space $T_q\mathcal M$, satisfying
\(
U(q)^\top U(q)=I_{n-m}
\)
and
\(
\nabla\xi(q)^\top U(q)=0.
\) Then we have

\textnormal{(i)} The exact absolute value of the Jacobian determinant of the exchange map $S$ is
\begin{equation}\label{eq:jaco}
|J_S(q,x)| = \sqrt{\frac{|\det G(q)|}{|\det G(q_x)|}}
\cdot
\frac{
\left|\det\Big(I_{n-m} + \sum_{i=1}^m v_i U(q)^\top \nabla^2\xi_i(q) U(q)\Big)\right|
}{
\left|\det\Big(I_{n-m} + \sum_{i=1}^m v_i U(q_x)^\top \nabla^2\xi_i(q_x) U(q_x)\Big)\right|}.
\end{equation}

\textnormal{(ii)} Define the Gram determinant approximation
\begin{equation}\label{eq:appro}
|\widehat{J_S}(q,x)| :=
\sqrt{\frac{|\det G(q)|}{|\det G(q_x)|}}.
\end{equation}
Then there exists a constant $C>0$ such that the expected approximation error satisfies
\begin{equation}\label{eq:error}
\mathbb{E}_{\Pi}
\!\left(
\left\vert
|\widehat{J_S}(q,x)|
-
|J_S(q,x)|
\right\vert
\right)
\le C\sqrt{\epsilon}.
\end{equation}

\end{theorem}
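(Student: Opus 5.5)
The approach is to factor $S=\phi^{-1}\circ\widetilde S\circ\phi$ and compute the Jacobian with respect to the reference measure $\sigma_{\mathcal M}(dq)\otimes dx$ on $\mathcal M\times\mathbb R^n$. Then $|J_S(q,x)| = |J_{\phi^{-1}}(q_x,q,v)|\cdot|J_{\widetilde S}|\cdot|J_\phi(q,x)|$.

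The middle factor $\widetilde S$ only permutes the two manifold coordinates and keeps $v$. Relative to $\sigma_{\mathcal M}\otimes\sigma_{\mathcal M}\otimes dv$ it therefore has unit Jacobian. The first coordinate of $\phi$ is the identity on $\mathcal M$, so $J_\phi(q,x)$ is just $1/|\det D\Phi^{-1}(q_x,v)|$. Here $D\Phi^{-1}$ is taken as a map from $T_{q_x}\mathcal M\times\mathbb R^m$ (with the Riemannian measure on the first factor) into $\mathbb R^n$. Likewise $J_{\phi^{-1}}$ at the image point equals $|\det D\Phi^{-1}(q,v)|$, because the new hot state is $\Phi^{-1}(q,v)$. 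Hence
\[
|J_S(q,x)|=\frac{|\det D\Phi^{-1}(q,v)|}{|\det D\Phi^{-1}(q_x,v)|}.
\]

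I would then evaluate $\det D\Phi^{-1}(q,v)$ from formula \eqref{eq:dphi-1} in the proof of Proposition~\ref{prop:maximal_tube}. Write $\delta q=U(q)a$ with $a\in\mathbb R^{n-m}$, so the matrix of $D\Phi^{-1}$ in these coordinates is $[\,(I+E)U \;\; \nabla\xi\,]$, an $n\times n$ matrix. Multiply on the left by the orthogonal matrix $[\,U \;\; \nabla\xi G^{-1/2}\,]^\top$. This gives a block matrix:
\begin{itemize}
\item the top-left block is $I_{n-m}+U^\top E U$;
\item the top-right block is $U^\top\nabla\xi=0$;
\item the bottom-right block is $G^{-1/2}\nabla\xi^\top\nabla\xi=G^{1/2}$;
\item the bottom-left block does not affect the determinant, since the matrix is block triangular.
\end{itemize}
Thus $|\det D\Phi^{-1}(q,v)|=\sqrt{\det G(q)}\,\bigl|\det(I_{n-m}+\sum_i v_iU^\top\nabla^2\xi_iU)\bigr|$, and taking the ratio gives \eqref{eq:jaco}. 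The involution property \eqref{eq:invo} on $\mathcal D$ ensures that $S$ is a well-defined diffeomorphism there, so the change of variables is legitimate.

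For (ii), set $A(q,v)=\det(I+\sum_i v_iU^\top\nabla^2\xi_iU)$. By Assumption~\ref{ass:xi}(iii), $\|\sum_i v_iU^\top\nabla^2\xi_iU\|\le C\sqrt m\|v\|$. On the tube, where $\|v\|<\tau\le\tau_1$, this norm is at most $1/2$, so both determinants lie in $[2^{-(n-m)},(3/2)^{n-m}]$ and satisfy $|A-1|\le C'\|v\|$. Consequently the ratio satisfies $|A(q,v)/A(q_x,v)-1|\le C''\|v\|$. The Gram prefactor $\sqrt{\det G(q)/\det G(q_x)}$ must also be bounded; I would assume, or derive from boundedness of $\nabla\xi$ on $\mathcal M$, an upper bound on $G$ to go with the lower bound $c I$. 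This gives $\bigl||\widehat J_S|-|J_S|\bigr|\le C'''\|v_x\|$ on $\mathcal D\cap\{\text{tube}\}$.

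Taking expectation under $\Pi=\mu\otimes\pi_\epsilon$ and applying Cauchy--Schwarz together with Theorem~\ref{thm:tubular_convergence}(i) gives $\mathbb E\|v_x\|\le(\mathbb E\|v_x\|^2)^{1/2}\le C\sqrt\epsilon$.

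The main obstacle is the contribution from admissible states with $x$ outside $\mathcal T_\tau(\mathcal M)$. There the determinants are not controlled and $\|v\|$ may be large. I would handle this by using the exponentially small mass $\pi_\epsilon(\mathcal T_\tau^c)=O(e^{-\alpha/\epsilon}\epsilon^{-m/2})$, as in the proof of Theorem~\ref{thm:tubular_convergence}. This requires either a uniform bound on $|J_S|$ over $\mathcal D$, which can be enforced by restricting $\mathcal D$ to states with $\|v\|<\tau$, or a moment bound combined with Hölder's inequality. Either way this tail is $o(\sqrt\epsilon)$, which completes \eqref{eq:error}.
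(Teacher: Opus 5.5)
Your proposal is correct and follows the paper's proof. Both use the same factorization $S=\phi^{-1}\circ\widetilde S\circ\phi$ with a unit-Jacobian swap, and the same block-triangular determinant in the orthogonal frame $[\,U\;\;\nabla\xi\,G^{-1/2}\,]$ for part (i). For part (ii) both reduce the determinant ratio to $1+O(\|v\|)$ and then use $\mathbb{E}\|v_x\|=O(\sqrt{\epsilon})$ from Theorem~\ref{thm:tubular_convergence}(i).

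Your treatment of (ii) is more careful than the paper's, which silently drops the prefactor $|\widehat{J_S}|$ and ignores admissible states with $x\notin\mathcal{T}_\tau(\mathcal{M})$. For the prefactor you need less than a uniform upper bound on $G$: since $\det G(q_x)\ge c^m$ and $q,x$ are independent under $\Pi=\mu\otimes\pi_\epsilon$, it suffices to have $\mathbb{E}_\mu\bigl[\sqrt{\det G(q)}\bigr]<\infty$.
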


\begin{proof}
We begin by deriving an explicit expression for the Jacobian determinant of the mapping $S$ in part (i). The exchange map can be written as the composition
\(
S = \phi^{-1} \circ \tilde S \circ \phi
\)
shown in equation \eqref{eq:exachange}, where the mappings are defined as follows
\begin{itemize}
\item $\phi(q,x) = (q,q_x,v)$ is the local coordinate mapping induced by the projection $\Phi$, with decomposition
\[
x = q_x + \nabla\xi(q_x)v,\qquad \xi(q_x)=0,
\]
\item $\tilde S(q,q_x,v) = (q_x,q,v)$ exchanges the two manifold points while leaving the normal coordinate $v$ unchanged,
\item $\phi^{-1}(q_x,q,v) = (q_x, q + \nabla\xi(q)v)$ reconstructs the original coordinates.
\end{itemize}
Applying the chain rule gives
\[
J_S(q,x)
=
J_{\phi^{-1}}(q_x,q,v)\,
J_{\tilde S}(q,q_x,v)\,
J_{\phi}(q,x).
\]
Since $\tilde S$ is a permutation of coordinates, its Jacobian determinant satisfies
\(
|\det J_{\tilde S}| = 1.
\)
Therefore,
\[
|J_S(q,x)|
=
\frac{
\left|\det J_{\phi^{-1}}(q_x,q,v)\right|
}{
\left|\det J_{\phi}(q,x)\right|
}.
\]
We now compute the Jacobian of $\phi^{-1}$. From equation \eqref{eq:dphi-1}, we have the Jacobian matrix of $\Phi^{-1}$ in block form
\[
J_{\Phi^{-1}}(q,v)
=
\begin{bmatrix}
I_n + \sum_{i=1}^m v_i \nabla^2\xi_i(q)
&
\nabla\xi(q)
\end{bmatrix},
\]
which is an $n\times n$ matrix.
To compute its determinant, we introduce an orthonormal basis adapted to the manifold. Let
\(
U(q)\in\mathbb R^{n\times (n-m)}
\)
have orthonormal columns spanning the cotangent space $T_q^*\mathcal M$, satisfying
\[
U(q)^\top U(q) = I_{n-m},\quad
\nabla\xi(q)^\top U(q) = 0.
\]
Define the orthogonal matrix
\(
C(q)
=
\begin{bmatrix}
U(q) & N(q)
\end{bmatrix},
\)
where $N(q) := \nabla\xi(q)\, G(q)^{-1/2}$ so that $C(q)$ is orthogonal and spans $\mathbb R^n$.
Since multiplication by an orthogonal matrix preserves determinants up to sign,
\[
|\det J_{\Phi^{-1}}(q,v)|
=
|\det(C(q)^\top J_{\Phi^{-1}}(q,v))|.
\]
Compute
\[
C(q)^\top J_{\Phi^{-1}}(q,v)
=
\begin{bmatrix}
U(q)^\top \left(I + \sum_{i=1}^m v_i \nabla^2\xi_i(q)\right) U(q)
&
U(q)^\top \nabla\xi(q)
\\
N(q)^\top \left(I + \sum_{i=1}^m v_i \nabla^2\xi_i(q)\right) U(q)
&
N(q)^\top \nabla\xi(q)
\end{bmatrix}.
\]
This matrix is block triangular ($U(q)^\top \nabla\xi(q)=0$), so its determinant equals the product of diagonal block determinants. Therefore,
\[
|\det J_{\Phi^{-1}}(q,v)|
=
\left|
\det\!\left(
I_{n-m}
+
\sum_{i=1}^m v_i U(q)^\top \nabla^2\xi_i(q) U(q)
\right)
\right|
\cdot
\sqrt{|\det G(q)|}.
\]
Applying this result to both $(q_x,v)$ and $(q,v)$ gives
\[
|J_S(q,x)|
=\sqrt{\frac{|\det G(q)|}{|\det G(q_x)|}}
\cdot
\frac{
\left|
\det\!\left(
I_{n-m}
+
\sum_{i=1}^m v_i U(q)^\top \nabla^2\xi_i(q) U(q)
\right)
\right|
}{
\left|
\det\!\left(
I_{n-m}
+
\sum_{i=1}^m v_i U(q_x)^\top \nabla^2\xi_i(q_x) U(q_x)
\right)
\right|
}.
\]
This establishes equation \eqref{eq:jaco} in part (i) of the theorem.

We now prove part (ii), which establishes the accuracy of the Gram determinant approximation.
Recall from part (i) that the exact Jacobian determinant of the exchange map satisfies
\begin{equation}\nonumber
|J_S(q,x)|
=
\sqrt{\frac{|\det G(q)|}{|\det G(q_x)|}}
\cdot
\frac{|\det A(q,v)|}{|\det A(q_x,v)|},
\end{equation}
where 
\begin{equation}\nonumber A(q,v) := I_{n-m} + \sum_{i=1}^m v_i\, U(q)^\top \nabla^2\xi_i(q) U(q). \end{equation}
Let $E(q,v) := \sum_{i=1}^m v_i\, U(q)^\top \nabla^2\xi_i(q) U(q)$. Under the assumption that the constraint Hessians $\nabla^2\xi_i$ are uniformly bounded on the manifold, there exists a constant $C > 0$ such that $\|E(q,v)\| \le C \|v\|$. Using the identity $\det(I + E) = 1 + \operatorname{tr}(E) + O(\|E\|^2)$, the ratio of the determinants in the normal bundle can be linearized as
\begin{equation}\nonumber
\frac{\det A(q,v)}{\det A(q_x,v)} = \frac{1 + \operatorname{tr}(E(q,v)) + O(\Vert v\Vert^2)}{1 + \operatorname{tr}(E(q_x,v)) + O(\Vert v\Vert^2)}  = 1 + \operatorname{tr}(E(q,v) - E(q_x,v)) + O(\Vert v\Vert^2).\end{equation}
Then the absolute error between the exact and approximate Jacobian is
\begin{equation}\nonumber
\left| |J_S(q,x)| - |\widehat{J_S}(q,x)| \right| = |\widehat{J_S}(q,x)| \cdot \left| \frac{\det A(q,v)}{\det A(q_x,v)} - 1 \right|.
\end{equation}
Substituting the previous linearization, we obtain
\begin{equation}\nonumber
\left| |J_S(q,x)| - |\widehat{J_S}(q,x)| \right| = |\widehat{J_S}(q,x)| \cdot \left| \sum_{i=1}^m v_i \Delta H_i(q, q_x) + O(\Vert v\Vert^2) \right|,
\end{equation}
where 
\begin{equation}\nonumber
    \Delta H_i(q, q_x) := \operatorname{tr}(U(q)^\top \nabla^2\xi_i(q) U(q)) - \operatorname{tr}(U(q_x)^\top \nabla^2\xi_i(q_x) U(q_x)).
\end{equation}
Taking the expectation with respect to $\Pi$ gives
\begin{equation}\nonumber
\mathbb{E}_{\Pi} [ | \sum_{i=1}^m v_i \Delta H_i + O(\Vert v\Vert^2) |] \le \sum_{i=1}^m \vert\Delta H_i\vert \cdot \mathbb{E}_{\pi_\varepsilon}[\Vert v_i\Vert] + O(\mathbb E_{\pi_\varepsilon}[\Vert v\Vert^2]).
\end{equation}
By Theorem \ref{thm:tubular_convergence}, we have 
\begin{equation}\nonumber
\mathbb{E}_{\Pi} \left[ \left| |J_S(q,x)| - |\widehat{J_S}(q,x)| \right| \right] = O(\sqrt{\epsilon}),
\end{equation}
completing the proof.
\end{proof}
 Geometrically, the exact volume change under the exchange map is determined by the combined deformation along the tangent and normal directions, as characterized by the determinant of the corresponding Jacobian.
The Gram determinant $|\det G(q)|$ measures the squared volume of the $m$-dimensional parallelepiped spanned by the constraint gradient vectors $\{\nabla\xi_i(q)\}_{i=1}^m$. This quantity quantifies the local volume scaling in the normal space and reflects the "thickness" of the tubular neighborhood around the manifold.
Consequently, the ratio
\(
\sqrt{|\det G(q)/\det G(q_x)|}
\)
captures the relative change in normal volume elements between the points $q$ and $q_x$ on the manifold. This variation dominates the overall volume change induced by the exchange map $S$ for small normal
displacements introduced by samples from $\pi_\epsilon$. This observation justifies the Gram determinant approximation and explains its accuracy in the small-noise regime.

With the analytical formula for the Jacobian determinant of the exchange map, a Metropolis--Hastings correction can be introduced to ensure that the resulting joint Markov chain admits the correct stationary distribution with respect to $\Pi$. Let $z^{(k)}:=(q^{(k)}, x^{(k)}) \in \mathcal{D}$ denote the current state in the admissible domain ensuring numerical involution, and let
\(
z_k:=(q_k, x_k) = S(q^{(k)}, x^{(k)})
\)
denote the proposal generated by the exchange map $S$. The Metropolis--Hastings acceptance probability is defined as
\begin{equation}\label{eq:accept}
\alpha\!\left( z^{(k)}, z_k \right)
=
\min\!\left\{
1,\,
\frac{\pi(z_k)}
     {\pi\!\left(z^{(k)}\right)}
\, |J_S\big(z^{(k)}\big)|
\right\},
\end{equation}
where $\pi(z)\,dz = \Pi(dz)$ denotes the density of the product measure $\Pi$.
The resulting exchange transition kernel $P_\epsilon$ is given by
\begin{equation}\label{eq:kernel}
    P_\epsilon\big(z, A\big)
=
\alpha\big(z, S(z)\big)\,
\mathbf{1}_{A}\!\big(S(z)\big)
+
\left[
1 - \alpha\big((z), S(z)\big)
\right]
\mathbf{1}_{A}(z),
\end{equation}
for any measurable set $A \subset \mathcal{D}$.
It will be shown in the next section that this transition kernel satisfies detailed balance with respect to $\Pi$, and is irreducible.
Furthermore, by replacing the exact Jacobian determinant \eqref{eq:jaco} with the Gram matrix approximation \eqref{eq:appro}, the convergence error is controlled by $\sqrt\epsilon$ in total variation distance.
We present the full replica exchange constrained HMC algorithm in Algorithm~\ref{alg:ptchmc}.
\begin{algorithm}[htbp]
\caption{Replica Exchange Constrained Hamiltonian Monte Carlo}
\label{alg:ptchmc}
\begin{algorithmic}[1]
\Require Target density $\mu$ on manifold $\mathcal{M}=\{q \in \mathbb{R}^n : \xi(q)=0\}$, temperature parameter $\epsilon>0$, exchange period $K\in\mathbb{N}$, step sizes $h_c,h_h>0$, trajectory lengths $L_c,L_h\in\mathbb{N}$, and total number of iterations $N$

\State Initialize cold-chain position $q^{(0)} \in \mathcal{M}$ and hot-chain position $x^{(0)} \in \mathbb{R}^n$

\For{$k = 0,1,\ldots,N-1$}

    \State Update cold chain: sample $q^{(k+1)}$ using Algorithm~\ref{alg:CHMC} with $h_c,L_c$
    
    \State Update hot chain: sample $x^{(k+1)}$ using Algorithm~\ref{alg:HMC} with $h_h,L_h$
    
    \If{$(k+1) \bmod K = 0$}
    
        \State Propose exchange
        \(
        (q', x') = S\big(q^{(k+1)}, x^{(k+1)}\big)
        \) using Algorithm \ref{check}
        
        \If{$\Vert (q',x') - (q^{(k+1)}, x^{(k+1)})\Vert\not=0$}
        
            \State Evaluate
            \(
            \left| J_S\big(q^{(k+1)}, x^{(k+1)}\big) \right|
            \)
            using equation \eqref{eq:jaco} or \eqref{eq:appro}
            
            \State Compute
            \(
            \alpha\!\left( (q^{(k+1)}, x^{(k+1)}), (q', x') \right)
            \)
            using equation \eqref{eq:accept}
            
            \State Sample $u \sim \mathrm{Uniform}(0,1)$
            
            \If{$u < \alpha$}
                \State
                \(
                (q^{(k+1)}, x^{(k+1)}) \leftarrow (q', x')
                \)
            \EndIf
            
        \EndIf
        
    \EndIf

\EndFor

\Return $\{(q^{(k)},x^{(k)})\}_{k=0}^{N}$
\end{algorithmic}
\end{algorithm}

\subsection{Convergence Properties}

We can now establish the detailed balance and irreducibility properties
of Algorithm~\ref{alg:ptchmc}, and prove convergence of the joint
Markov chain to the correct stationary distribution.
Recall that the cold chain targets the Hausdorff probability measure
$\mu$ supported on the implicit manifold
$\mathcal{M}$
while the hot chain targets the relaxed density $\pi_\epsilon$ supported on $\mathbb R^n$.
The joint target measure 
is  the product measure $\Pi$ in equation \eqref{eq:pi}.
Essentially the Algorithm~\ref{alg:ptchmc} consists of three components: a CHMC update preserving $\mu$, a standard HMC update preserving $\pi_\epsilon$, and a deterministic exchange proposal
mapping $S$ defined on the admissible domain $\mathcal D$,
with Jacobian $J_S(z)$ and
Metropolis--Hastings correction defined in equation \eqref{eq:accept}.

Existing results in the literature ensure that
CHMC is reversible, irreducible, and convergent
with respect to $\mu$ when $\mathcal M$ is connected and satisfies Assumption \ref{ass:xi} \citep{M3},
and that standard HMC has the same properties with respect
to $\pi_\epsilon$ for $\epsilon>0$.
The presence of multiple disconnected components of $\mathcal{M}$ prevents a single constrained chain from being globally irreducible. Therefore, our main goal is to demonstrate that the exchange mechanism restores global irreducibility of $\Pi$ within the admissible region $\mathcal{D}$, so that the cold chain $\{q^{(k)}\}_{k=0}^N$ explores different components and converges to the correct target density $\mu$. We impose the following assumption on the samplers.

\begin{assumption}\label{assm:samplers}
Let $P_\mu$ be the transition kernel of the CHMC algorithm~\ref{alg:CHMC} and $P_{\pi_\epsilon}$ be the transition kernel of the HMC algorithm~\ref{alg:HMC}. Suppose the implicit manifold $\mathcal{M}$ satisfies Assumption~\ref{ass:xi} and has $M$ connected components, i.e.\ $\mathcal{M} = \bigcup_{i=1}^{M} \mathcal{M}_i$. Then we assume:

(i) $P_\mu$ is reversible and $\mu$-irreducible on each connected component $\mathcal{M}_i$ for $i = 1, 2, \ldots, M$;

(ii) $P_{\pi_\epsilon}$ is reversible and $\pi_\epsilon$-irreducible on the ambient space $\mathbb{R}^n$.

\end{assumption}

Under Assumption~\ref{assm:samplers}, we construct the full transition kernel of Algorithm~\ref{alg:ptchmc}. Without loss of generality, suppose that an exchange step is performed after every $K$ iterations of both the CHMC and the HMC kernels. 
Since the cold and hot chains evolve independently between exchange steps, their joint evolution over $K$ steps is described by the product kernel
\[
\left(
P_\mu^K \otimes P_{\pi_\epsilon}^K
\right)
\big((q,x), dq' dx'\big)
=
P_\mu^K(q,dq') \,
P_{\pi_\epsilon}^K(x,dx'),
\]
where $P_\mu^K$ and $P_{\pi_\epsilon}^K$ denote $K$ successive applications of the constrained HMC kernel and the standard HMC kernel, respectively.
The full transition kernel acting on $\mathcal D$ is therefore defined as the composition
\(
P
:=
P_\epsilon
\circ
\left(
P_\mu^K \otimes P_{\pi_\epsilon}^K
\right),
\) where $P_\epsilon$ is the exchange kernel defined in equation \eqref{eq:kernel}.
We now state the main convergence result for Algorithm~\ref{alg:ptchmc}.

\begin{theorem}[Convergence]\label{thm:pt_full}
Let Assumption~\ref{assm:samplers} hold and let $P$ be the full transition kernel. Then the Markov chain generated by Algorithm~\ref{alg:ptchmc} satisfies

(i) The transition kernel $P$ is reversible with respect to $\Pi$ on $\mathcal{D}$.

(ii) The chain is $\Pi$-irreducible on $\mathcal{D}$.

(iii)For any initial state $z: = (q, x)\in\mathcal{D}$,
    \[
        \left\| P^n\left (z,\cdot \right ) - \Pi \right\|_{\mathrm{TV}} \;\longrightarrow\; 0 \quad \text{as } n\to\infty.
    \]
In particular, $\Pi$ is the unique stationary distribution of $P$ on $\mathcal{D}$, and the chain is ergodic.
\end{theorem}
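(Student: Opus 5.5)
The plan is to verify detailed balance for each of the three building blocks of $P$ separately, then to deal with the composition, then to deduce (ii) and (iii) from (i) together with standard Markov chain theory on general state spaces. Throughout I work on $\mathcal{D}$ with the reference measure $\sigma_{\mathcal M}(dq)\,dx$, so that $\Pi$ has the density $\pi(z)$ used in \eqref{eq:accept}.

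\textbf{The exchange kernel $P_\epsilon$.} This is a deterministic-proposal Metropolis--Hastings kernel with an involutive proposal. On $\mathcal{D}$ the map $S$ satisfies $S\circ S=\mathrm{id}$ by \eqref{eq:invo}, and the reversible involution check of Algorithm~\ref{check} makes $\mathcal{D}$ invariant under $S$. Differentiating $S^2=\mathrm{id}$ gives $|J_S(S(z))|\,|J_S(z)|=1$, with $J_S$ given by Theorem~\ref{thm:jacobian_gram}(i). For bounded measurable $f,g$ I will show
\[
\int f(z)\,\alpha(z,S(z))\,g(S(z))\,\pi(z)\,dz=\int g(z)\,\alpha(z,S(z))\,f(S(z))\,\pi(z)\,dz .
\]
The argument substitutes $z=S(w)$ and uses the identity
\[
\pi(z)\,\alpha(z,S(z))=\min\{\pi(z),\,\pi(S(z))\,|J_S(z)|\},
\]
together with the Jacobian relation above. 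The rejection term $[1-\alpha]\mathbf 1_A(z)$ is trivially symmetric, so $P_\epsilon$ is $\Pi$-reversible.

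\textbf{The product kernel $P_\mu^K\otimes P_{\pi_\epsilon}^K$.} By Assumption~\ref{assm:samplers}, $P_\mu$ is $\mu$-reversible and $P_{\pi_\epsilon}$ is $\pi_\epsilon$-reversible. Powers of a reversible kernel are reversible, and the tensor product of reversible kernels is reversible for the product measure; I will check this on rectangles $A\times B$ and extend by a $\pi$--$\lambda$ argument. So this factor is $\Pi$-reversible.

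\textbf{The composition $P$, which is the main obstacle.} A product of two self-adjoint operators on $L^2(\Pi)$ is self-adjoint only if they commute. Here the adjoint of $P$ is $P^{*}=(P_\mu^K\otimes P_{\pi_\epsilon}^K)\,P_\epsilon$, and I will compare it with $P$ directly. My first attempt is to exploit the structure of $S$: it swaps manifold base points and keeps the normal coordinate $v$, and $\Pi$ is a product measure, so I will try to show that conjugation by the exchange commutes with the product kernel on $\mathcal{D}$. That would give $P=P^{*}$, hence reversibility as stated. If the commutation cannot be verified, the fallback is to read Algorithm~\ref{alg:ptchmc} as the symmetric splitting $P_\epsilon^{1/2}$--product--$P_\epsilon^{1/2}$, realised by performing the exchange at the midpoint of each block of $K$ steps. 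A palindromic composition of reversible kernels is reversible, and its powers differ from those of $P$ only by a conjugation, so the statement holds for the chain observed at those times. In either case $\Pi$-invariance of $P$ follows, since each factor preserves $\Pi$.

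\textbf{Irreducibility and convergence, parts (ii) and (iii).}
\begin{itemize}
\item For (ii), take $A\subset\mathcal D$ with $\Pi(A)>0$ and a starting point $(q,x)$ with $q\in\mathcal M_i$, and pick a component $\mathcal M_j$ that carries positive $\Pi$-mass of $A$. The first $K$ steps move $x$, by $\pi_\epsilon$-irreducibility, into a positive-measure set of points of $\mathcal T_\tau(\mathcal M)$ whose projection lies in $\mathcal M_j$. The exchange then moves the cold state to $\mathcal M_j$ with positive probability, because $\alpha>0$ wherever the densities are positive. A further block of CHMC/HMC steps, using irreducibility on $\mathcal M_j$ and on $\mathbb R^n$, reaches $A$ with positive probability.
\item For (iii), aperiodicity holds because the exchange rejects with positive probability on a set of positive measure, and the HMC components also have positive rejection probability. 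With invariance and $\Pi$-irreducibility in hand, I will apply the standard ergodic theorem for $\phi$-irreducible aperiodic chains (Tierney 1994, or Meyn--Tweedie). This gives total-variation convergence for $\Pi$-almost every start, and uniqueness of the stationary distribution.
\item Upgrading to every $z\in\mathcal D$ needs Harris recurrence. I will obtain it by showing that $P$ has an absolutely continuous component, which follows from the HMC and CHMC kernels having densities.
\end{itemize}
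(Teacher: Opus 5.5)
Your diagnosis of the composition step is correct, and it is exactly where the paper's proof breaks. The paper shows that $P_\mu^K\otimes P_{\pi_\epsilon}^K$ and $P_\epsilon$ are each $\Pi$-reversible and then simply asserts that their composition is. Your primary route (proving commutation) cannot succeed, however, because $P$ is in general not $\Pi$-reversible, so (i) as stated cannot be proved. Take the paper's own example $\xi(q)=q^2-1$, $\mathcal M=\{-1,1\}$, with $V$ even and $\tau<1$. CHMC is the identity on each one-point component, hence reversible and irreducible there. On the tube, $S(q,x)=(\operatorname{sign}x,\,q|x|)$, with $|J_S|=1$ and, by evenness, $\alpha\equiv1$. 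So every exchange leaves a hot state with the sign of the previous cold state. Run the product block first, as in Algorithm~\ref{alg:ptchmc}, and let $A=\{1\}\times(1-\tau,1+\tau)$ and $B=\{-1\}\times(1-\tau,1+\tau)$. Then $P(z,A)=0$ for every $z\in B$, whereas $\int_A\Pi(dz)\,P(z,B)>0$ as soon as the hot chain can move from near $1$ to near $-1$ in $K$ steps. What survives is $\Pi$-invariance of $P$, which is all that (ii)--(iii) need. Your fallback establishes reversibility of a different kernel: the chain observed at mid-block times, $(P_\mu\otimes P_{\pi_\epsilon})^{K/2}P_\epsilon(P_\mu\otimes P_{\pi_\epsilon})^{K/2}$ for even $K$. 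It is the product block that gets halved; $P_\epsilon^{1/2}$ has no meaning here. That is a sound reformulation, but not a proof of (i). Your involution argument for $P_\epsilon$ itself is fine, provided $S$ is an exact involution on an $S$-invariant $\mathcal D$. The tolerance-based check in Algorithm~\ref{check} guarantees neither.

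For (ii) you improve on the paper. The paper invokes $\mu$-irreducibility of $P_\mu$ on all of $\mathcal M$, although Assumption~\ref{assm:samplers}(i) gives it only per component. It also claims every $\Pi$-positive set contains a positive rectangle $A_q\times A_x$, which is false in general. So it never uses the exchange to cross components. Your reachability steps, however, do not follow from Assumption~\ref{assm:samplers} either. Irreducibility gives reachability at some time depending on the start and the target, not at the fixed time $K$. Irreducibility of two kernels also does not make their product irreducible: two copies of the reversible flip chain on $\{0,1\}$ give a reducible product. Finally, each block ends with an exchange that is forced wherever $\alpha=1$ and can carry you out of $A$. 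You need a fixed-time domination such as $\mu|_{\mathcal M_j}\ll P_\mu^K(q,\cdot)$ and $\pi_\epsilon\ll P_{\pi_\epsilon}^K(x,\cdot)$. Fubini then gives $\Pi|_{\mathcal M_j\times\mathbb R^n}\ll(P_\mu^K\otimes P_{\pi_\epsilon}^K)(z,\cdot)$, and you still need separate control of the closing exchange. In (iii) you are right that the ergodic theorem only gives convergence from $\Pi$-a.e.\ start. The paper asserts it for every $z$, and its aperiodicity claim $P(z,\{z\})\ge1-\alpha(z,S(z))>0$ fails wherever $\alpha=1$ and ignores the product block. But your aperiodicity and Harris arguments rely on densities and positive rejection probabilities that Assumption~\ref{assm:samplers} does not supply. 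Even with densities, the product kernel charges the $\Pi$-null sets $\{q\}\times\mathbb R^n$ and $\mathcal M\times\{x\}$ when one sampler rejects throughout a block. So Tierney's criterion for Metropolis kernels (an absolutely continuous part plus an atom at the current point) does not apply directly. You would need, for example, to show that almost surely both samplers eventually accept within a common block.
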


\begin{proof}
We start by proving the reversibility of the kernel $P$.
Since $P_\mu$ is reversible with respect to $\mu$, for any measurable sets
$A_1,B_1 \subset \mathcal M$, we have
\[
\int_{A_1} \mu(dq)\, P^K_\mu(q,B_1)
=
\int_{B_1} \mu(dq)\, P^K_\mu(q,A_1).
\]
Similarly, since $P_{\pi_\epsilon}$ is reversible with respect to $\pi_\epsilon$, we have
\[
\int_{A_2} \pi_\epsilon(dx)\, P_{\pi_\epsilon}^K(x,B_2)
=
\int_{B_2} \pi_\epsilon(dx)\, P_{\pi_\epsilon}^K(x,A_2)
\]
for any measurable $A_2,B_2 \subset \mathbb R^d$.
Consider now the product kernel acting on $(q,x)\in\mathcal{M}\times \mathbb R^n$,
\[
\left(
P_\mu^K \otimes P_{\pi_\epsilon}^K
\right)
\big((q,x), (dq',dx')\big)
=
P_\mu^K(q,dq')\,
P_{\pi_\epsilon}^K(x,dx').
\]
Let $A,B \subset \mathcal D$ be measurable,
we obtain
\[
\int_A \Pi(dq,dx)\,
\left(
P_\mu^K \otimes P_{\pi_\epsilon}^K
\right)
\big((q,x),B\big)
=
\int_A
\mu(dq)\,\pi_\epsilon(dx)
\int_B
P_\mu^K(q,dq')\,
P_{\pi_\epsilon}^K(x,dx').
\]
By Fubini's theorem and the separate reversibility of
$P_\mu^K$ and $P_{\pi_\epsilon}^K$, we may exchange
$(q,x)$ and $(q',x')$, so that the product kernel
$P_\mu^K \otimes P_{\pi_\epsilon}^K$
is reversible with respect to $\Pi$ on $\mathcal{M}\times \mathbb R^n$.
By construction, the exchange kernel $P_\epsilon$ in equation \eqref{eq:kernel}
is a Metropolis--Hastings kernel with acceptance probability
$\alpha$ defined in eqation \eqref{eq:accept}. 
Therefore, for any measurable $A,B\subset\mathcal D$,
\[
\int_A \Pi\big(dq,dx\big)\,P_\epsilon\big ((q,x),B)
=
\int_B \Pi(dq,dx)\,P_\epsilon\big((q,x),A\big),
\]
and hence $P_\epsilon$ is reversible with respect to $\Pi$ on $\mathcal{D}$.
Since both
\(
P_\mu^K \otimes P_{\pi_\epsilon}^K\) and 
\(P_\epsilon
\)
are reversible with respect to the same measure $\Pi$,
their composition $P$
is also reversible with respect to $\Pi$.

We next show that the Markov chain generated by Algorithm~\ref{alg:ptchmc}
is $\Pi$-irreducible on $\mathcal D$.
Let $z=(q,x)\in\mathcal D$ and let $A\subset\mathcal D$ be measurable with
\(
\Pi(A) > 0.
\)
Since
\(
\Pi(dq,dx)=\mu(dq)\,\pi_\epsilon(dx),
\)
Fubini's theorem implies that there exist measurable sets
$A_q\subset\mathcal M$ and $A_x\subset\mathbb R^d$
such that
\[
\mu(A_q)>0,
\qquad
\pi_\epsilon(A_x)>0,
\qquad
A_q\times A_x \subset A.
\]
By Assumption~\ref{assm:samplers}, the kernel $P_\mu$ is
$\mu$-irreducible on $\mathcal M$ and
$P_{\pi_\epsilon}$ is $\pi_\epsilon$-irreducible on $\mathbb R^d$.
Hence their $K$-step kernels $P_\mu^K$ and
$P_{\pi_\epsilon}^K$ are also irreducible with respect to
$\mu$ and $\pi_\epsilon$, respectively.
For the product kernel
\(
P_\mu^K \otimes P_{\pi_\epsilon}^K\),
irreducibility of the marginals yields
\[
P_\mu^K(q,A_q) > 0,
\qquad
P_{\pi_\epsilon}^K(x,A_x) > 0.
\]
Therefore,
\[
\left(
P_\mu^K \otimes P_{\pi_\epsilon}^K
\right)
\big((q,x),A_q\times A_x\big)
=
P_\mu^K(q,A_q)\,
P_{\pi_\epsilon}^K(x,A_x)
>0.
\]
Since the full kernel is
\(
P
=
P_\epsilon
\circ
\left(
P_\mu^K \otimes P_{\pi_\epsilon}^K
\right),
\)
and the exchange kernel $P_\epsilon$ has strictly positive
acceptance probability on $\mathcal D$,
it follows that $P$ preserves positivity of reachable sets.
Consequently, there exists an integer $n\ge1$ such that
\[
P^n\big((q,x),A_q\times A_x\big) > 0,
\]
which gives the irreducibility of $P$ on $\mathcal{D}$.

It remains to show the convergence. Since $P$ is reversible with respect to $\Pi$, it follows that $\Pi$ is stationary distribution of the chain generated by Algorithm \ref{alg:ptchmc}.
Since $P$ is irreducible and satisfies
\[
P(z,\{z\}) \ge 1-\alpha(z,S(z)) > 0
\]
for $\forall z = (q,x)\in\mathcal{D}$,
the chain is aperiodic.
Therefore, for any initial state $z \in \mathcal D$,
\[
\left\|
P^n(z,\cdot) - \Pi
\right\|_{\mathrm{TV}}
\longrightarrow 0.
\]
Uniqueness of the stationary distribution follows from irreducibility, and ergodicity follows from convergence in total variation.
\end{proof}
With convergence of the Markov chain established in Theorem~\ref{thm:pt_full}, we now consider the effect of replacing the exact Jacobian determinant $|J_S(z)|$ in the exchange acceptance probability with the Gram approximation $\widehat{J}_S(z)$ defined in equation \eqref{eq:appro} for arbitrary $z=(q,x)\in\mathcal{D}$. This modification results in a perturbed Metropolis--Hastings acceptance probability
\[
\widehat{\alpha}(z,S(z))
=
\min\!\left\{
1,\,
\frac{\pi(S(z))}{\pi(z)}
\,|\widehat{J}_S(z)|
\right\},
\]
and defines a corresponding approximate exchange kernel $\widehat{P}_\epsilon$. The full approximate transition kernel is therefore
\(
\widehat{P}
=
\widehat{P}_\epsilon
\circ
(P_{\pi_\epsilon}^K
\otimes
P_\mu^K).
\)
Since the Metropolis--Hastings construction guarantees detailed balance with respect to its invariant distribution, the approximate kernel $\widehat{P}_\epsilon$ admits a stationary distribution $\widehat{\Pi}$ satisfying
\[
\widehat{\Pi}(dz)\,\widehat{P}_\epsilon(z,dz')
=
\widehat{\Pi}(dz')\,\widehat{P}_\epsilon(z',dz).
\]
Consequently, the full approximate kernel $\widehat{P}$ admits $\widehat{\Pi}$ as its stationary distribution, and by the same arguments as in Theorem~\ref{thm:pt_full}, the Markov chain generated using the Gram approximation satisfies
\[
\left\|
\widehat{P}^n(z,\cdot)
-
\widehat{\Pi}
\right\|_{\mathrm{TV}}
\longrightarrow 0,
\qquad
\forall z \in \mathcal D.
\]
It remains to quantify the difference between the approximate invariant distribution $\widehat{\Pi}$ and the true target distribution $\Pi$, which we present the following Theorem \ref{thm:gram_bias}. 

\begin{theorem}[Bias of the Gram approximation]
\label{thm:gram_bias}
Let $\mathcal{M} = \{q \in \mathbb{R}^n : \xi(q) = 0\}$ be an implicit manifold 
satisfying Assumption~\ref{ass:xi}. Assume the admissible domain $\mathcal{D}\subset \mathcal{M}\times \mathbb R^n$ is compact. Let $P$ denote the exact transition kernel with invariant distribution $\Pi$, and let $\widehat{P}$ denote the perturbed kernel obtained by replacing $|J_S(z)|$ with its Gram approximation $|\widehat{J}_S(z)|$. Then there exists a constant $C > 0$ such that
\[
\|\widehat{\Pi} - \Pi\|_{\mathrm{TV}} \le C \sqrt{\varepsilon},
\]
where $\widehat{\Pi}$ is the invariant distribution of $\widehat{P}$.
\end{theorem}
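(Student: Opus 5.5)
The argument is a standard perturbation bound for invariant measures of uniformly ergodic kernels, applied to the kernels $P$ and $\widehat P$ on the compact set $\mathcal{D}$. It proceeds in three steps.

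\emph{Step 1: uniform ergodicity.} Since $\mathcal D$ is compact and the chain is irreducible and aperiodic (Theorem~\ref{thm:pt_full}), I would upgrade the convergence in Theorem~\ref{thm:pt_full}(iii) to uniform ergodicity. I would show that $\mathcal D$ is small for $P$: the constrained and unconstrained HMC kernels have densities bounded below on compact sets, so some minorization $P^{n_0}(z,\cdot)\ge \delta\,\nu(\cdot)$ holds for all $z\in\mathcal D$. This gives
\[
\sup_{z\in\mathcal D}\|P^n(z,\cdot)-\Pi\|_{\mathrm{TV}}\le 2(1-\delta)^{\lfloor n/n_0\rfloor}.
\]
Consequently $\tau:=\sum_{n\ge0}\sup_z\|P^n(z,\cdot)-\Pi\|_{\mathrm{TV}}<\infty$.

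\emph{Step 2: the perturbation identity.} With $\widehat\Pi\widehat P=\widehat\Pi$ and $\Pi P=\Pi$, I would write
\[
\widehat\Pi-\Pi=\widehat\Pi(\widehat P-P)\sum_{n\ge0}(P^n-\Pi).
\]
This identity uses $\widehat\Pi(\widehat P-P)\mathbf 1=0$, and it yields the standard bound (Mitrophanov-type)
\[
\|\widehat\Pi-\Pi\|_{\mathrm{TV}}\le C_\tau\int\widehat\Pi(dz)\,\|\widehat P(z,\cdot)-P(z,\cdot)\|_{\mathrm{TV}}.
\]
The kernels differ only through the exchange step. The difference of the exchange kernels at $z$ has total variation at most $2|\widehat\alpha(z,S(z))-\alpha(z,S(z))|$, and because the HMC product kernel is a contraction, the same bound applies to the full kernels. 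Since $t\mapsto\min\{1,t\}$ is $1$-Lipschitz,
\[
|\widehat\alpha-\alpha|\le\frac{\pi(S(z))}{\pi(z)}\,\bigl||\widehat J_S(z)|-|J_S(z)|\bigr|.
\]
On the compact $\mathcal D$, the densities $\pi$ restricted to $\mathcal D$ are continuous and positive, so the ratio $\pi(S(z))/\pi(z)$ is bounded by some $C_1$.

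\emph{Step 3: bounding the Jacobian error under $\widehat\Pi$.} It remains to show $\mathbb E_{\widehat\Pi}\bigl||\widehat J_S|-|J_S|\bigr|\le C\sqrt\varepsilon$. Theorem~\ref{thm:jacobian_gram}(ii) gives this under $\Pi$, not $\widehat\Pi$, and this mismatch is the main obstacle. I would avoid it by using a pointwise bound instead. From the proof of Theorem~\ref{thm:jacobian_gram}, $\bigl||\widehat J_S|-|J_S|\bigr|\le C_2\|v_x\|$ on $\mathcal D$, because $|\widehat J_S|$ is bounded on compact $\mathcal D$ by Assumption~\ref{ass:xi}(ii) and the $\Delta H_i$ are bounded by Assumption~\ref{ass:xi}(iii). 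It therefore suffices to control $\mathbb E_{\widehat\Pi}\|v_x\|$. The exchange step leaves $v$ invariant, since $S$ retains the normal coordinate. Also, the $x$-marginal of $\widehat\Pi$ is driven by the $\pi_\epsilon$-reversible HMC kernel between exchanges. I would exploit this to compare the $x$-marginal with $\pi_\varepsilon$, so that Theorem~\ref{thm:tubular_convergence}(i) and Jensen's inequality give $\mathbb E\|v_x\|\le\sqrt{C\varepsilon}$.

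If that comparison proves delicate, the fallback is a self-consistent bound. Write $\mathbb E_{\widehat\Pi}\|v\|\le\mathbb E_\Pi\|v\|+\sup_{\mathcal D}\|v\|\,\|\widehat\Pi-\Pi\|_{\mathrm{TV}}$, and insert it into Step 2 to obtain
\[
\|\widehat\Pi-\Pi\|_{\mathrm{TV}}\le C\sqrt\varepsilon+C'\sqrt\varepsilon\,\|\widehat\Pi-\Pi\|_{\mathrm{TV}},
\]
using that $\sup_{\mathcal D}\|v\|$ is of order $\sqrt\varepsilon$ on the relevant region. For $\varepsilon$ small, the second term can be absorbed into the left-hand side, which gives the claimed bound $C\sqrt\varepsilon$.
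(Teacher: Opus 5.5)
Your Steps 1--2 use the same ingredients as the paper's proof: the $1$-Lipschitz bound for $\min\{1,\cdot\}$, a bounded density ratio on the compact $\mathcal D$, a Doeblin condition giving uniform ergodicity of $P$, and an $L^1$ perturbation bound. You are also right that $\widehat\Pi-\Pi=\widehat\Pi(\widehat P-P)\sum_{n\ge0}(P^n-\Pi)$ weights the kernel difference by $\widehat\Pi$, while Theorem~\ref{thm:jacobian_gram}(ii) controls it only under $\Pi$; the paper applies the perturbation bound with $\Pi$-weights without addressing this. However, your Step 3, which is meant to bridge the mismatch, does not close.

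\begin{itemize}
\item \textbf{Main route.} This is a hope, not an argument. The exchange preserves the law of $v$, and the HMC move preserves $\pi_\varepsilon$. But the $x$-marginal $\nu$ of $\widehat\Pi$ is not $\pi_\varepsilon$, because the exchange relocates the base point from $q_x$ to $q$ under the perturbed acceptance. Stationarity only yields that $\nu$ and $\nu P_{\pi_\epsilon}^K$ induce the same law of $v$. That says nothing about the law of $v$ under $\pi_\varepsilon$.
\item \textbf{Fallback.} This rests on $\sup_{\mathcal D}\|v\|=O(\sqrt\varepsilon)$, which is false. $\mathcal D$ is cut out by the involution check independently of $\varepsilon$, and it contains states whose normal coordinate is of order $\tau$. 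Theorem~\ref{thm:tubular_convergence}(i) bounds $\|v\|$ only in $\pi_\varepsilon$-mean. So the coefficient $C_\tau C_1C_2\sup_{\mathcal D}\|v\|$ multiplying $\|\widehat\Pi-\Pi\|_{\mathrm{TV}}$ is not small, and the absorption step fails.
\end{itemize}

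The missing idea is to swap the roles of the two kernels. Your Step 1 applies equally to $\widehat P$: it uses the same HMC moves, and $\widehat\alpha>0$ exactly where $\alpha>0$ on $\mathcal D$. The identity $\Pi-\widehat\Pi=\Pi(P-\widehat P)\sum_{n\ge0}(\widehat P^n-\widehat\Pi)$ then gives $\|\Pi-\widehat\Pi\|_{\mathrm{TV}}\le C\,\|\Pi(P-\widehat P)\|_{\mathrm{TV}}$, with $C$ the mixing constant of $\widehat P$. The HMC product kernel is applied before the exchange and leaves $\Pi$ invariant, so $\Pi(P-\widehat P)=\Pi(P_\epsilon-\widehat P_\epsilon)$. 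Its total variation is at most $2\,\mathbb E_\Pi|\widehat\alpha-\alpha|$, which is precisely the $\Pi$-expectation that Theorem~\ref{thm:jacobian_gram}(ii) controls. No comparison of $\widehat\Pi$ with $\Pi$ is needed.

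This also repairs a slip in your Step 2. With the HMC move first, the $\widehat\Pi$-weighted difference of the full kernels is an expectation under $\widehat\Pi(P_\mu^K\otimes P_{\pi_\epsilon}^K)$, not under $\widehat\Pi$. More generally, it is ergodicity of $\widehat P$, not of $P$, that licenses a $\Pi$-weighted bound of the kind the paper invokes.

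Two caveats on constants remain, and the paper's proof shares both.
\begin{itemize}
\item $C_1=\sup_{\mathcal D}\pi(S(z))/\pi(z)$ contains the factor $\exp\{(\|\xi(x)\|^2-\|\xi(x')\|^2)/\varepsilon\}$, with $x'$ the hot component of $S(z)$, so it is not uniform in $\varepsilon$. The paper's $M$ has the same problem. You can avoid it entirely with $|\widehat\alpha-\alpha|\le\alpha\,\bigl|1-|\widehat J_S|/|J_S|\bigr|\le C\|v\|$ on $\mathcal D$. Jensen and Theorem~\ref{thm:tubular_convergence}(i) then give the $\sqrt\varepsilon$ rate under $\Pi$.
\item The $\varepsilon$-dependence of the Doeblin constant is left unaddressed in both arguments.
\end{itemize}
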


\begin{proof}
The two kernels share the same deterministic proposal $z' = S(z)$ and differ 
only in their acceptance probabilities. Specifically,
\[
P(z,\cdot) 
= \alpha(z)\,\delta_{S(z)}(\cdot) + (1-\alpha(z))\,\delta_z(\cdot), \qquad
\widehat{P}(z,\cdot) 
= \widehat{\alpha}(z)\,\delta_{S(z)}(\cdot) + (1-\widehat{\alpha}(z))\,\delta_z(\cdot),
\]
where
\[
\alpha(z) 
= \min\!\left\{1,\,\frac{\pi(S(z))}{\pi(z)}\,|J_S(z)|\right\}, 
\qquad 
\widehat{\alpha}(z) 
= \min\!\left\{1,\,\frac{\pi(S(z))}{\pi(z)}\,|\widehat{J}_S(z)|\right\}.
\]
Therefore
\[
\|\widehat{P}(z,\cdot) - P(z,\cdot)\|_{\mathrm{TV}}
= |\widehat{\alpha}(z) - \alpha(z)|.
\]
Setting $\delta(z) := |J_S(z)| - |\widehat{J}_S(z)|$ and using the fact that 
$a \mapsto \min\{1,a\}$ is $1$-Lipschitz on $(0,\infty)$,
\[
|\widehat{\alpha}(z) - \alpha(z)|
\le 
\frac{\pi(S(z))}{\pi(z)}\,|\delta(z)|.
\]
Hence
\[
\|\widehat{P}(z,\cdot) - P(z,\cdot)\|_{\mathrm{TV}}
\le 
\frac{\pi(S(z))}{\pi(z)}\,|\delta(z)|. 
\]
We take the expectation of the above formula with respect to the invariant measure $\Pi(dz) = \pi(z)\,dz$
\[
\mathbb{E}_{\Pi}
\!\left[
\|\widehat{P}(z,\cdot) - P(z,\cdot)\|_{\mathrm{TV}}
\right]
\le 
\int_{\mathcal{D}} 
\frac{\pi(S(z))}{\pi(z)}\,|\delta(z)|\,\pi(z)\,dz.
\]
By assumption, Algorithm~\ref{check} restricts the state space to a compact domain $\mathcal{D}$. Since the target density $\pi(z)$ is continuous and strictly positive on $\mathcal{D}$, the ratio of the densities is uniformly bounded. Thus, there exists a constant $M > 0$ such that $\sup_{z \in \mathcal{D}} \frac{\pi(S(z))}{\pi(z)} \le M$. 
Applying this bound, we obtain
\[
\int_{\mathcal{D}} \frac{\pi(S(z))}{\pi(z)}\,|\delta(z)|\,\pi(z)\,dz 
\le 
M \int_{\mathcal{D}} |\delta(z)|\,\pi(z)\,dz 
= M\,\mathbb{E}_{\Pi}[|\delta|].
\]
By Theorem~\ref{thm:jacobian_gram}(ii), the expected absolute error of the Gram approximation satisfies $\mathbb{E}_{\Pi}[|\delta|] \le C' \sqrt{\varepsilon}$. Hence, the expected total variation difference between the kernels is bounded by:
\[
\mathbb{E}_{\Pi}
\!\left[
\|\widehat{P}(z,\cdot) - P(z,\cdot)\|_{\mathrm{TV}}
\right]
\le 
M C' \sqrt{\varepsilon}.
\]
Because $P$ is reversible and irreducible with respect to $\Pi$ by Theorem~\ref{thm:pt_full}(i)--(ii), and the effective state space $\mathcal{D}$ is restricted to a compact valid tube by the algorithm, the Doeblin condition holds on $\mathcal{D}$. Thus, $P$ is uniformly ergodic. 
For uniformly ergodic Markov chains, $L^1$-based perturbation bounds \citep{J30} guarantee that the distance between the invariant distributions is bounded by the integrated difference of the transition kernels. Therefore, there exists a constant $C_{\mathrm{erg}} > 0$ such that
\[
\|\widehat{\Pi} - \Pi\|_{\mathrm{TV}}
\le C_{\mathrm{erg}}\, \mathbb{E}_{\Pi}\!\left[ \|\widehat{P}(z,\cdot) - P(z,\cdot)\|_{\mathrm{TV}} \right]
\le C \sqrt{\varepsilon},
\]
where $C = C_{\mathrm{erg}} M C'$, which completes the proof.
\end{proof}

From Theorem \ref{thm:gram_bias}, we know that replacing the exact Jacobian $J_S$ in equation \eqref{eq:jaco} with the Gram approximation $\widehat{J}_S$ in equation \eqref{eq:appro} introduces an average bias of order $\sqrt{\epsilon}$. Consequently, the relaxation parameter must be chosen sufficiently small to minimize the approximation error with respect to the target distribution $\Pi$. In addition, selecting a small relaxation parameter $\epsilon$ increases the probability that a proposal passes the involutive checks in Algorithm \ref{check}. It increases the probability that a proposed state $z$ lies in the accessible domain $\mathcal{D}$. This domain typically concentrates near the manifold $\mathcal{M}$, particularly when the manifold has high curvature. Importantly, our theoretical guarantees rely on the assumption that $\mathcal{D}$ is compact. This condition is generally satisfied as the involution check in Algorithm \ref{check} restricts the working domain by rejecting any proposals that fail to converge within finite iterative steps. To strictly guarantee the compactness of $\mathcal{D}$, one can impose an additional restriction that rejects proposals where the magnitude of the normal displacement $\Vert v \Vert$ exceeds a predefined threshold. This ensures that the auxiliary variables remain strictly bounded.

As discussed in Section \ref{sec:pt}, the presence of disconnected components in the manifold is heuristically analogous to multimodality in probability distributions. When $\epsilon$ is chosen to be small in order to increase the probability of proposing samples within $\mathcal{D}$ and to reduce the bias in the stationary distribution, the hot chain may have a very low probability of transitioning between different branches of the manifold. As a result, although the Markov chain remains theoretically ergodic, transitions between disconnected components may require prohibitively large steps.
To address this issue, one can use a sequence of samplers associated with increasing relaxation parameters $0<\epsilon_1<\cdots<\epsilon_I$. The chain with smallest relaxation parameter corresponds to the chain that most closely approximates the target distribution on the manifold, while chains with larger relaxation parameters explore progressively larger neighborhoods of $\mathcal{M}$. Exchange move defined through the mapping $S$ in equation \eqref{eq:exachange} between constrained sampler on $\mathcal{M}$ and unconstrained sampler on $\mathbb R^n$ is performed only for the coldest chain (the one with relaxation parameter $\epsilon_1$), while exchanges among the remaining chains follow the standard parallel tempering HMC (PT-HMC) framework as discussed in \citep{J15, J23, j24}.
In this way, chains with larger relaxation parameters can explore the ambient space more freely and traverse between different branches of the manifold. Through the exchange mechanism, this global exploration is gradually propagated to the constrained chain, which ultimately samples from the target measure $\mu$ on $\mathcal{M}$ even when the manifold contains disconnected components.

\section{Case Studies}\label{sec5}

We present three case studies to demonstrate the effectiveness of Algorithm~\ref{alg:ptchmc} for sampling from probability measures supported on disconnected manifolds.
The first example is a synthetic benchmark designed to illustrate the ability of the proposed method to transition between disconnected components. We also compare the exact Metropolis--Hastings correction in equation \eqref{eq:accept}, based on the exact Jacobian determinant \eqref{eq:jaco}, with its Gram approximation \eqref{eq:appro} to assess the bias introduced by the approximation.
The second example arises from infectious disease modeling, where structural non-identifiability leads to non-unique parameter estimates. In this setting, multiple parameter values yield identical model outputs, forming a manifold in parameter space that may decompose into disconnected components. 
The third example is drawn from molecular dynamics. We consider a tetrahedral molecule with holonomic constraints enforcing fixed bond lengths and angles, resulting in a low-dimensional manifold. Due to symmetry, the manifold consists of two disconnected components corresponding to enantiomeric configurations. We evaluate the ability of the proposed algorithm to explore both components and recover the target distribution.

\subsection{Synthetic Example}\label{C1}
We first consider a synthetic example in $\mathbb{R}^2$ consisting of four disconnected ellipses. The manifold is defined as
\(
\mathcal{M} = \bigcup_{k=1}^{4} \mathcal{M}_k,
\)
where each component $\mathcal{M}_k$ is an ellipse given by
\[
\mathcal{M}_k = \left\{ q \in \mathbb{R}^2 : 
\frac{(q_1 - c_{k,1})^2}{a_k^2} + \frac{(q_2 - c_{k,2})^2}{b_k^2} - 1 = 0 \right\},
\]
with centers $c_k = (c_{k,1}, c_{k,2}) \in \mathbb{R}^2$ and semi-axis parameters 
$a = (a_1,\dots,a_4)$ and $b = (b_1,\dots,b_4)$.
To express the manifold in the implicit form required by the constrained Hamiltonian framework, we define the constraint function
\[
\xi(q) = \prod_{k=1}^{4} \left( 
\frac{(q_1 - c_{k,1})^2}{a_k^2} + \frac{(q_2 - c_{k,2})^2}{b_k^2} - 1 
\right).
\]
The manifold can then be written as the zero level set
\(
\mathcal{M} = \{ q \in \mathbb{R}^2 : \xi(q) = 0 \}.
\)
Since each component $\mathcal{M}_k$ is smooth and compact, and the components are disjoint, the manifold $\mathcal{M}$ satisfies Assumption~\ref{ass:xi}.
We let the target distribution $\mu$ to be the uniform probability measure on $\mathcal{M}$, i.e.,
\(
\mu(dq) \propto \delta(\xi(q)) \, dq,
\)
where $\delta(\cdot)$ denotes the Dirac measure supported on the constraint set $\mathcal{M}$.
Sampling from this distribution is challenging for standard constrained samplers due to the disconnected structure of $\mathcal{M}$. In particular, once a Markov chain is initialized on a given component, standard constrained Hamiltonian Monte Carlo (CHMC) methods, such as Algorithm~\ref{alg:CHMC}, remain confined to that component and cannot transition between distinct ellipses.

In contrast, we apply Algorithm~\ref{alg:ptchmc} with a fixed relaxation parameter $\epsilon = 0.3$. The method employs two coupled chains: a constrained chain sampling on the manifold, and a relaxed chain targeting a smoothed distribution in the ambient space, defined by $\pi_\epsilon(dx) \propto \exp\left(-\frac{1}{\epsilon}|\xi(x)|^2\right) dx$. The relaxed chain explores a neighborhood of $\mathcal{M}$ in $\mathbb{R}^2$, thereby enabling transitions between distinct components.
With the replica exchange framework discussed in Section~\ref{sec:pt}, sampler states are periodically exchanged between the constrained and relaxed chains. This mechanism facilitates the transfer of samples across different components and generates uniform samples across all components of $\mathcal{M}$.

For the experimental setup, we consider four centers corresponding to $\mathcal{M}_1$ to $\mathcal{M}_4$, located at $(2,0)$, $(-2,0)$, $(0,2)$, and $(0,-2)$, respectively. The associated semi-axis parameters are $a = (1.6, 0.8, 1.2, 0.6)$ and $b = (0.6, 1.4, 0.9, 1.3)$. The objective is to sample from the target probability measure $\mu$ defined on this manifold.
To compare the traditional CHMC method with the proposed approach, we run both algorithms under identical settings with the same initialization on $\mathcal{M}_1$. For the proposed Algorithm~\ref{alg:ptchmc}, the relaxed chain is initialized at points in the ambient space near $\mathcal{M}_1$.
The results are presented in Figure~\ref{fig1}. In Figure~\ref{fig1a}, the standard CHMC sampler is observed to remain confined to a single connected component determined by its initialization. In contrast, Figure~\ref{fig1b} shows that the proposed method successfully explores all four components through replica exchange. Figure~\ref{fig1c} displays samples from the relaxed chain in the ambient space. Finally, the trace plot of the constrained chain in Figure~\ref{fig1d} demonstrates frequent transitions across all four ellipses, indicating effective mixing over the disconnected manifold.

\begin{figure}[H]
\centering
\begin{subfigure}[b]{0.45\textwidth}
    \centering
    \includegraphics[scale=0.8]{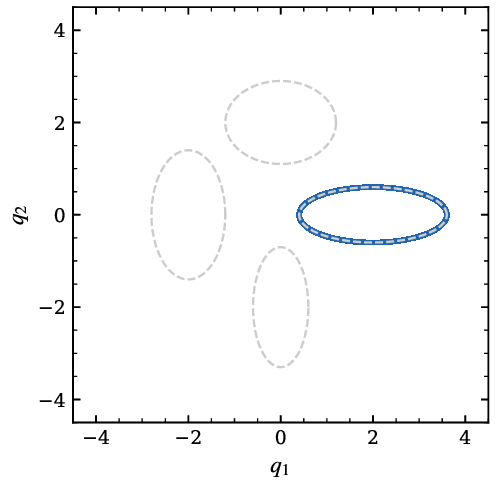}
    \caption{Algorithm \ref{alg:CHMC} result}
    \label{fig1a}
\end{subfigure}%
\hfill
\begin{subfigure}[b]{0.45\textwidth}
    \centering
    \includegraphics[scale=0.8]{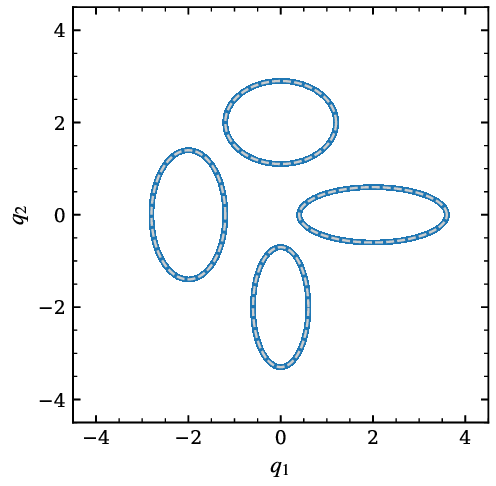}
    \caption{Algorithm \ref{alg:ptchmc} result}
    \label{fig1b}
\end{subfigure}

\vspace{0.3cm}

\begin{subfigure}[b]{0.45\textwidth}
    \centering
    \includegraphics[scale=0.8]{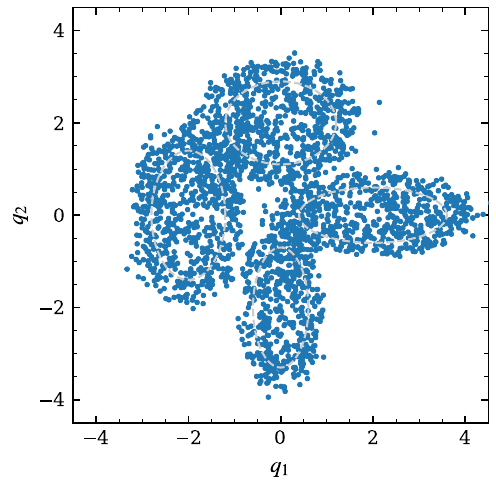}
    \caption{Relaxed chain samples}
    \label{fig1c}
\end{subfigure}%
\hfill
\begin{subfigure}[b]{0.45\textwidth}
    \centering
    \includegraphics[scale=0.8]{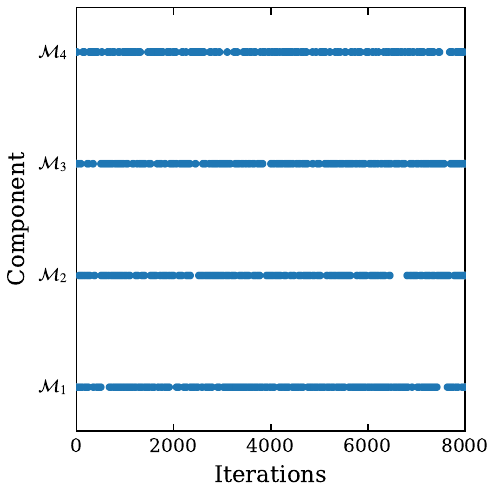}
    \caption{Constrained chain trace plot}
    \label{fig1d}
\end{subfigure}

\caption{Comparison of Algorithm~\ref{alg:CHMC} and Algorithm~\ref{alg:ptchmc} on a synthetic example with four disconnected elliptical components.}

\label{fig1}
\end{figure}

Having demonstrated that Algorithm~\ref{alg:ptchmc} enables efficient exploration across disconnected components, we now examine the bias introduced by replacing the exact Jacobian determinant in the Metropolis--Hastings acceptance probability with its Gram approximation in equation \eqref{eq:appro}, as analyzed in Theorem \ref{thm:gram_bias}.
In this synthetic example, both the exact Jacobian and its Gram approximation admit closed-form expressions. This allows us to directly quantify the approximation error and verify the theoretical result that the induced total variation distance between the sampled posterior and the target distribution is of order \(\sqrt{\epsilon}\).
To illustrate this behavior numerically, we run Algorithm~\ref{alg:ptchmc} over a sequence of relaxation parameters
\(
\epsilon_i = 0.1 + (i-1)\times 0.1,  i = 1,2,\dots,10.
\)
For each \(\epsilon\), we estimate the total variation error between the empirical distribution and the target. According to Theorem~\ref{thm:gram_bias}, the expected magnitude of the bias in total variation induced by the Gram approximation satisfies
\(
\mathrm{TV}\bigl(\widehat{\Pi}, \Pi\bigr) = O(\sqrt{\epsilon}).
\)
The estimated total variation error and posterior discrepancy are reported in Figure~\ref{fig2b}. The result is consistent with the theoretical predictions. Furthermore, by examining the convergence behavior through the sample mean of the \(q_1\) coordinate (Figure~\ref{fig2c}), we observe that the chains targeting \(\widehat{\Pi}\) and \(\Pi\) converge at similar rates, with only a small bias in their converged values.

\begin{figure}[h]
\centering
\begin{subfigure}[b]{0.45\textwidth}
    \includegraphics[width=\linewidth]{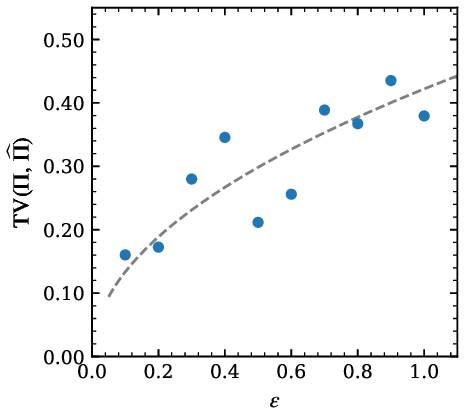}
    \caption{Total variation error}
    \label{fig2b}
\end{subfigure}%
\hfill
\begin{subfigure}[b]{0.45\textwidth}
    \includegraphics[width=\linewidth]{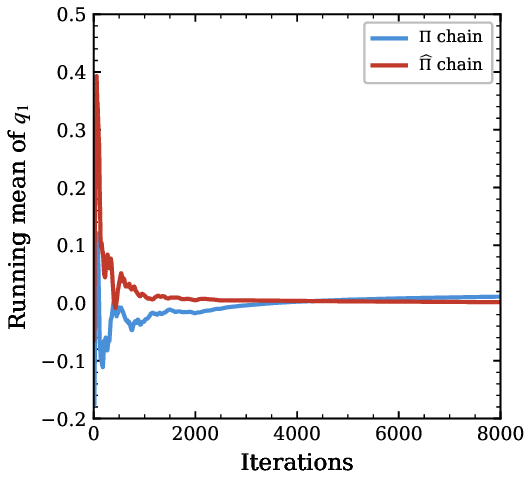}
    \caption{Convergence comparison}
    \label{fig2c}
\end{subfigure}

\caption{Numerical validation of Theorem~\ref{thm:gram_bias} using exact and Gram-approximated Jacobian determinants in Algorithm~\ref{alg:ptchmc}.}
\label{Fig2}
\end{figure}

\subsection{Infectious Disease Models}\label{C2}

The next numerical example arises from Bayesian inverse problems in infectious disease modeling \citep{J3, J8}. In this setting, one begins with a specified and evaluable forward model, typically derived from a system of differential equations describing the dynamics of disease transmission. The goal is to infer unknown model parameters from partially observed model outputs.
A fundamental challenge in such inverse problems is non-identifiability: multiple parameter values may produce equivalent model outputs, making exact parameter recovery impossible \citep{J14}. A common approach to address this issue is to characterize the non-identifiable structure of the model by identifying combinations of parameters that are structurally identifiable. This leads to the construction of a manifold of equivalent parameter sets, each yielding the same observable behavior. 
In this example, we show that a simple SIR epidemic model with two infectious compartments exhibits non-identifiability, and show how the proposed algorithm can be used to help explore the non-identifable manifolds.

Consider an infectious disease model with state variables 
$(S(t), I_1(t), I_2(t), R(t)) \in \mathbb{R}^4$, representing the susceptible population, two infectious populations corresponding to distinct viral strains, and the recovered population, respectively. We denote the model parameters by the vector
\(
\theta := (\beta_1, \beta_2, \gamma_1, \gamma_2, \rho) \in B \subset \mathbb{R}^5,
\)
where $\beta_1$ and $\beta_2$ are the transmission rates associated with the two strains, $\gamma_1$ and $\gamma_2$ are the corresponding recovery rates, and $\rho$ is the laboratory confirmed case reporting rate.
The model assumes that the observable quantity is reported incidence, given by a fraction $\rho$ of the total infectious population, i.e.,
\(
h(t,\theta): = \rho \big(I_1(t) + I_2(t)\big),
\)
which is a standard observation mechanism in epidemiological models with multiple circulating strains, such as influenza \citep{J27}. Figure~\ref{fig:SIR_model} presents both the underlying differential equations and a representative simulation of the model dynamics. The primary objective is to infer plausible parameter values from an observed case trajectory to enable quantitative analysis and informing public health decision-making. However, the input--output structure of this model induces structural non-identifiability. 
\begin{figure}[htbp]
    \centering
    % Left: equations
    \begin{minipage}[c]{0.45\linewidth}
        \[
        \begin{aligned}
        &S' = -\beta_1 S I_1 - \beta_2 S I_2 \\
        &I_1' = \beta_1 S I_1 - \gamma_1 I_1 - \rho I_1 \\
        &I_2' = \beta_2 S I_2 - \gamma_2 I_2 - \rho I_2 \\
        &R' = \gamma_1 I_1 + \gamma_2 I_2 \\
        &h(t,\theta) = \rho (I_1 + I_2) \quad \text{(output function)}
        \end{aligned}
        \]
    \end{minipage}%
    % Right: figure
    \begin{minipage}[c]{0.45\linewidth}
        \centering
        \includegraphics[width=\linewidth]{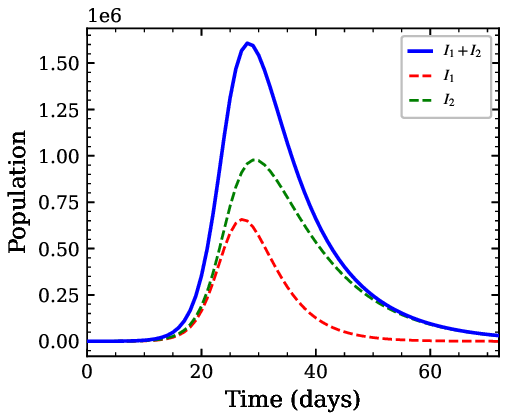}
    \end{minipage}
    \caption{SIR model with two infectious compartments. Left: governing differential equations. Right: simulation results.}
    \label{fig:SIR_model}
\end{figure}
Intuitively, the symmetry between the parameters of the two viral strains implies that the model is non-identifiable. Consequently, the full parameter vector $\theta$ cannot be uniquely identified from observations of $h(t,\theta)$ alone.
Structural identifiability analysis reveals that only certain combinations of parameters are identifiable \citep{j25,j26}. In this case, the identifiable quantities are characterized by the mapping $\xi : \mathbb{R}^5 \to \mathbb{R}^4$ defined by
\[
\xi(\theta) = 
\left(
\gamma_1 + \gamma_2,\;
\gamma_1 \gamma_2,\;
\frac{\beta_1 + \beta_2}{\rho},\;
\frac{\rho(\gamma_1 - \gamma_2)}{\beta_1 - \beta_2}
\right).
\]
This mapping reduces the five-dimensional parameter space to a four-dimensional space of identifiable combinations. As a result, $\xi$ is not injective, and each level set
\(
\mathcal{M}_{\vartheta} := \bigl\{\theta \in B : \xi(\theta) = \vartheta\bigr\}
\)
defines a one-dimensional algebraic manifold consisting of all parameter configurations that yield identical model outputs. This manifold characterizes the structural non-identifiability inherent in the model shown in Figure~\ref{fig:SIR_model}.
Suppose that we observe a confirmed case trajectory $y$, consider the Bayesian inverse problem
\[
\mu(\theta \mid y) 
\propto \exp\bigl(-\varrho\bigl(y - h(\cdot,\theta)\bigr)\bigr)\,\mu_0(\theta),
\]
where $\mu_0$ denotes a prior distribution. Since the likelihood depends on $\theta$ only through $\xi(\theta)$, it follows that the posterior distribution, up to the prior, is constant along each level set $\mathcal{M}_{\vartheta}$.
Therefore, it is not possible to uniquely estimate the full parameter vector $\widehat{\theta}$. At best, one can identify the corresponding level set
\[
\mathcal{M}_{\widehat{\vartheta}} 
= \bigl\{\theta \in B : \xi(\theta) = \xi(\widehat{\theta}) = \widehat{\vartheta}\bigr\}.
\]
In practice, estimating $\widehat{\vartheta}$ is itself a nontrivial inverse problem due to the non-injectivity of the algebraic mapping $\xi$. For the purpose of this study, we assume that an accurate estimate $\widehat{\vartheta}$ is available. The remaining task is then to recover biologically meaningful parameter values $\theta$ lying on the manifold $\mathcal{M}_{\widehat{\vartheta}}$.
Such a manifold may be disconnected. In particular, algebraic symmetries in the model, such as the exchangeability of $(\gamma_1, \gamma_2)$ and $(\beta_1, \beta_2)$, can induce multiple disjoint components corresponding to distinct but observationally equivalent parameter values.

We now demonstrate that Algorithm~\ref{alg:ptchmc} enables efficient sampling from disconnected implicit manifolds of the form $\mathcal{M}_{\widehat{\vartheta}}$. Without loss of generality, we choose the parameter domain $B$ as a product of intervals consistent with ranges reported in the epidemiological literature. We further assume a uniform prior $\mu_0$ over $B$, so that sampling from the posterior $\mu(\theta \mid y)$ restricted to $\mathcal{M}_{\widehat{\vartheta}}$ reduces to uniform sampling over the manifold.
We select the true parameter vector as
\[
\theta = \bigl(1.6\times 10^{-7},\, 1.3\times 10^{-7},\, 0.2,\, 0.1,\, 0.3\bigr),
\]
with total population size $4\times 10^6$ and initial conditions $I_1(0)=30$ and $I_2(0)=50$. The corresponding identifiable combination is given by
\[
\widehat{\vartheta} = \xi(\theta) = \bigl(0.3,\; 0.02,\; 9.67 \times 10^{-7},\; 1.0 \times 10^{6}\bigr).
\]
The inverse problem is thus formulated as characterizing the implicit algebraic manifold
\(
\mathcal{M}_{\widehat{\vartheta}} = \bigl\{ \theta \in B : \xi(\theta) = \widehat{\vartheta} \bigr\}.
\)
To mitigate numerical instability arising from singularities at \(\rho = 0\) and \(\beta_1 = \beta_2\), and to ensure that the constraint mapping \(\xi\) remains smooth over the parameter space \(B\), we apply a smooth regularization to the rational components. We introduce a small regularization parameter \(\delta = 1 \times 10^{-8}\) to form robust, differentiable approximations. The regularized constraint mapping is thus defined as
\[
\tilde{\xi}(\theta) = 
\left(
\gamma_1 + \gamma_2,\;
\gamma_1 \gamma_2,\;
\frac{(\beta_1 + \beta_2)\rho}{\rho^2 + \delta^2},\;
\frac{\rho(\gamma_1 - \gamma_2)(\beta_1 - \beta_2)}{(\beta_1 - \beta_2)^2 + \delta^2}
\right).
\]
By bounding the denominators strictly away from zero without introducing non-differentiable transitions, this modification prevents exploding gradients when \(\rho \rightarrow 0\) or \(\beta_1 \rightarrow \beta_2\). It ensures that \(\tilde{\xi}\) is a smooth embedding satisfying Assumption~\ref{ass:xi}. We then apply Algorithm~\ref{alg:ptchmc} to explore the manifold. The results are summarized in Figure~\ref{fig3}.
Figure~\ref{fig3}a shows a projected density of the implicit manifold on the $(\gamma_1,\rho)$ plane. The manifold $\mathcal{M}_{\widehat{\vartheta}} $ exhibits two disconnected components corresponding to $\gamma_1 = 0.1$ and $\gamma_1 = 0.2$. A projection of the relaxed distribution is also overlaid, illustrating that samples concentrate near the two components while maintaining sufficient mass between them to enable transitions across branches.
Figure~\ref{fig3}b presents the marginal distribution of $\gamma_1$, which clearly shows two dominant modes aligned with the two manifold components. Figure~\ref{fig3}c displays the projection of the manifold onto the $(\log_{10} \beta_1, \log_{10} \beta_2)$ plane, where two parallel disconnected structures are observed. In Figure~\ref{fig3}d, we report the proportion of samples visiting one connected component, denoted by $\widehat{p}(\mathcal{M}_A)$. The results indicate that the chain transitions between the two components with approximately equal probability, as expected due to symmetry.
These results demonstrate that the proposed algorithm successfully captures the global structure of the solution set, rather than being confined to a single local component. This highlights its effectiveness for sampling from implicitly defined, disconnected manifolds arising in non-identifiable inverse problems.
In settings where such analytical identifiable combinations are not available, this framework provides a promising direction for developing identifiability-aware sampling methods. In particular, one may combine standard inference on identifiable combinations with manifold-based exploration techniques to recover and characterize the corresponding solution sets in the original parameter space.

\begin{figure}[htbp]
\centering
\begin{subfigure}[b]{0.45\textwidth}
    \includegraphics[scale = .8]{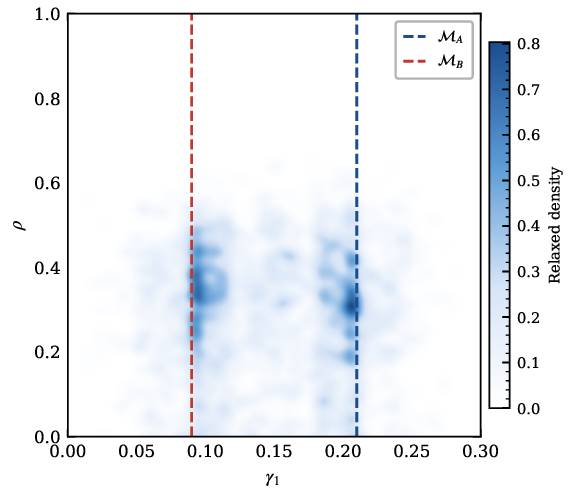}
    \caption{Projected density on the $(\gamma_1, \rho)$ plane}
    \label{fig3a}
\end{subfigure}%
\hfill
\begin{subfigure}[b]{0.05\textwidth}
\end{subfigure}%
\hfill
\begin{subfigure}[b]{0.45\textwidth}
    \includegraphics[scale = .8]{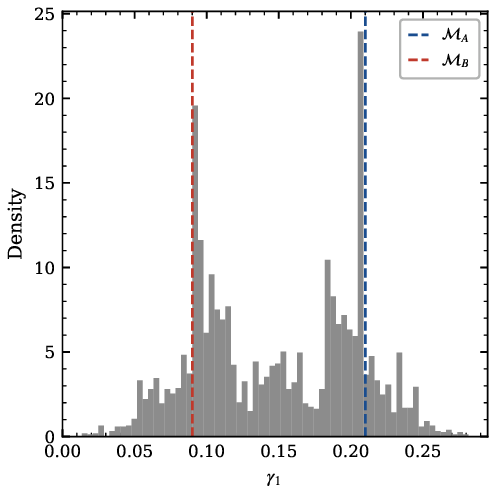}
    \caption{Marginal distribution of $\gamma_1$}
    \label{fig3b}
\end{subfigure}%

\begin{subfigure}[b]{0.45\textwidth}
    \includegraphics[scale = .8]{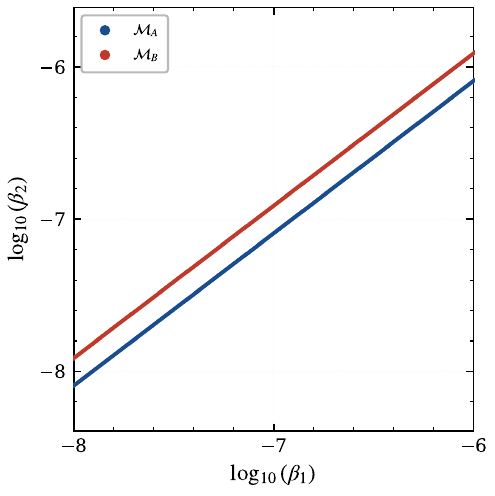}
    \caption{Projected $\mathcal{M}_{\widehat{\vartheta}} $ on the $(\log_{10} \beta_1, \log_{10} \beta_2)$ plane}
    \label{fig3c}
\end{subfigure}%
\hfill
\begin{subfigure}[b]{0.05\textwidth}
\end{subfigure}%
\hfill
\begin{subfigure}[b]{0.45\textwidth}
    \includegraphics[scale = .8]{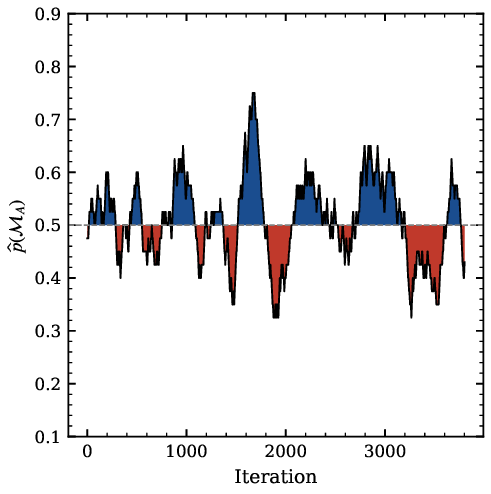}
    \caption{Proportion of samples visiting each branch}
    \label{fig3d}
\end{subfigure}%
\caption{Sampling results on the disconnected implicit manifold $\mathcal{M}_{\widehat{\vartheta}} $.}
\label{fig3}
\end{figure}

\subsection{Molecular Dynamics}
\label{C3}

The last numerical example shows the application of the proposed algorithm \ref{alg:ptchmc} to molecular dynamics (MD). MD simulations aim to sample molecular configurations from a Boltzmann distribution defined on a high-dimensional configuration space. When rigid geometric constraints, such as fixed bond lengths and bond angles, are imposed, the admissible configurations no longer occupy an open subset of $\mathbb{R}^d$, but instead lie on a lower-dimensional implicit manifold $\mathcal{M}$. Sampling from distributions supported on such constraint manifolds is a central challenge in computational chemistry and biophysics \citep{J28, J29}. In particular, algorithm ~\ref{alg:CHMC} provides an approach for sampling in this setting.
Beyond the difficulties introduced by constraints, an additional challenge arises when the constraint manifold is disconnected. This situation occurs naturally in stereochemistry, where molecular symmetry partitions the configuration space into topologically disjoint components corresponding to distinct enantiomeric (mirror-image) configurations. Since continuous rigid motions cannot connect these components without violating the constraints, standard CHMC samplers become confined to a single component. As a result, they fail to explore the full support of the target distribution, leading to biased estimates of thermodynamic quantities that depend on the global structure of the distribution.

The objective of this numerical experiment is to evaluate the performance of the proposed method on a disconnected constraint manifold arising from a standard MD stimulation setting. We demonstrate that the algorithm enables transitions between disconnected components and assess the accuracy of the estimated component probabilities against a known analytical reference.
To improve the efficiency of the exchange mechanism in a moderately high-dimensional configuration space, we employ multiple relaxed chains with distinct relaxation parameters $\epsilon$. Sampling efficiency and convergence are evaluated using the effective sample size (ESS) and the Gelman--Rubin diagnostic $\hat{R}$.
To this end, we construct a controlled benchmark inspired by the rigid geometry of a tetrahedral molecule. The resulting configuration space decomposes into exactly two disconnected components with a known probability ratio available in closed form.
Consider a tetrahedral configuration.
\[
  q = (q_1, q_2, q_3, q_4) \in \mathbb{R}^{12}, \qquad q_i \in \mathbb{R}^3.
\]
The configuration is invariant under global translation.  We eliminate this
redundancy by fixing the central atom at the origin, $q_1 = (0,0,0)$, so that
the remaining degrees of freedom are
\(
  q = (q_2, q_3, q_4) \in \mathbb{R}^9.
\)
Rigid tetrahedral geometry is enforced through six scalar constraints.  The three
bond-length constraints require each bond to have unit length
\[
\left\{
\begin{aligned}
  \xi_1(q) &= \|q_2\|^2 - 1 = 0, \\
  \xi_2(q) &= \|q_3\|^2 - 1 = 0, \\
  \xi_3(q) &= \|q_4\|^2 - 1 = 0.
\end{aligned}
\right.
\]
In a regular tetrahedron, the angle between any two bonds satisfies
$\theta = \arccos(-1/3) \approx 109.47^\circ$.
The three bond-angle constraints are therefore
\[
\left\{
\begin{aligned}
  \xi_4(q) &= q_2 \cdot q_3 + \tfrac{1}{3} = 0, \\
  \xi_5(q) &= q_2 \cdot q_4 + \tfrac{1}{3} = 0, \\
  \xi_6(q) &= q_3 \cdot q_4 + \tfrac{1}{3} = 0.
\end{aligned}
\right.
\]
Collecting these equations, we define the constraint map
$\xi(q) = (\xi_1(q), \ldots, \xi_6(q)) : \mathbb{R}^9 \to \mathbb{R}^6$,
and the implicit constraint manifold is then
\(
  \mathcal{M} = \{ q \in \mathbb{R}^9 : \xi(q) = 0 \}.
\)
Since six independent constraints are imposed on a nine-dimensional ambient
space, the manifold has dimension $\dim(\mathcal{M}) = 9 - 6 = 3$.
Geometrically, $\mathcal{M}$ parameterizes all rigid orientations of the
tetrahedral molecule with the central atom fixed at the origin, and it can be checked that such a manifold satisfies assumption \ref{ass:xi}.

The global structure of $\mathcal{M}$ is characterised by the signed volume of the tetrahedron formed by the three bond vectors,
\[
  V(q) = \det\!\begin{bmatrix} q_2 & q_3 & q_4 \end{bmatrix}.
\]
For any $q \in \mathcal{M}$, the absolute value of the volume is constant, $|V(q)| = V_0$, where $V_0 = \frac{\sqrt{2}}{12}$. The sign of $V(q)$ distinguishes the two enantiomers, and the manifold decomposes into two disjoint components,
\[
  \mathcal{M} = \mathcal{M}_+ \cup \mathcal{M}_-, \qquad 
  \mathcal{M}_\pm = \{ q \in \mathcal{M} : V(q) = \pm V_0 \}.
\]
Any continuous path connecting $\mathcal{M}_+$ to $\mathcal{M}_-$ must pass through configurations with $V(q)=0$, corresponding to coplanar bond vectors, which violate the tetrahedral constraints. Hence, $\mathcal{M}_+$ and $\mathcal{M}_-$ are topologically disconnected.
We consider the Boltzmann distribution on $\mathcal{M}$ as the target probability measure, defined as in equation \eqref{eq:target_measure},
\[
  \pi(q) \propto \exp(-\beta U(q))\, d\sigma(q),
\]
where $\beta=1$ and $d\sigma$ denotes the Riemannian volume measure on $\mathcal{M}$. To introduce asymmetry between the two components, we define a chiral potential
\(
  U(q) = \alpha V(q), 
\) for $\alpha > 0.$
Since $V(q)$ is constant on each component, the corresponding probabilities satisfy
\[
  P(\mathcal{M}_+) \propto e^{-\alpha V_0}, \qquad
  P(\mathcal{M}_-) \propto e^{\alpha V_0},
\]
and hence
\[
  \frac{P(\mathcal{M}_+)}{P(\mathcal{M}_-)} = \exp(-2\alpha V_0).
\]
Setting $\alpha = 6$ gives
\[
  \frac{P(\mathcal{M}_+)}{P(\mathcal{M}_-)} = \exp(-\sqrt{2}) \approx 0.243,
\]
so that $\mathcal{M}_-$ carries approximately $80.4\%$ of the total probability mass.
Standard CHMC evolves dynamics constrained to $\mathcal{M}$ and therefore preserves the sign of $V(q)$. As a result, trajectories cannot transition between $\mathcal{M}_+$ and $\mathcal{M}_-$, and the sampler remains confined to the initial component, leading to biased estimates.

To overcome topological barriers, we apply the Algorithm~\ref{alg:ptchmc}. The hard constraint $\xi(q)=0$ is relaxed using a parameter $\varepsilon > 0$, as defined in equation \eqref{eq:tubular_relaxation}. Larger values of $\varepsilon$ diffuse the distribution into the ambient space, smoothing the energy landscape and bridging the neighborhoods of $\mathcal{M}_+$ and $\mathcal{M}_-$. This relaxation enables the cold CHMC chain to traverse otherwise disconnected components.
To optimize exchange efficiency and facilitate global exploration, we employ a ladder of relaxation parameters, $\varepsilon \in \{0.05,\, 0.15,\, 0.30,\, 0.60\}$, with each level evolving independently via HMC adapted to the relaxed potential (Algorithm~\ref{alg:HMC}). Periodic replica exchanges between adjacent levels allow transitions discovered at higher relaxation scales to propagate back to the target level. This ensures the constrained chain effectively samples both $\mathcal{M}_+$ and $\mathcal{M}_-$.

The algorithm was executed for $10^6$ iterations. The cold chain, which targets the exact constrained distribution $\pi(q)$, serves as the primary sampler for posterior inference. Empirical results from the cold chain yield estimated component probabilities $\widehat{P}(\mathcal{M}_+) \approx 0.196$ and $\widehat{P}(\mathcal{M}_-) \approx 0.804$, with an estimated ratio $\widehat{P}(\mathcal{M}_+) / \widehat{P}(\mathcal{M}_-) = 0.244$ ($95\%$ CI $\pm 0.04$).
The estimated ratio closely agrees with the analytical value $P(\mathcal{M}_+)/P(\mathcal{M}_-) \approx 0.243$, confirming that the sampler accurately captures the global distribution without becoming trapped in isolated topological modes.
Sampling performance is evaluated comprehensively across all tempering levels. In particular, the ESS, the $\hat{R}$, and the exchange acceptance rates between adjacent levels ($\alpha$) are reported in Table~\ref{tab:sampling-efficiency}. These metrics demonstrate that the proposed algorithm achieves both high accuracy and computational efficiency when sampling from disconnected manifold components.

\begin{table}[h]
\centering
\caption{Sampling efficiency and exchange acceptance rates across tempering levels.}
\label{tab:sampling-efficiency}
\begin{tabular}{lcccc}
\toprule
\textbf{Level} & \textbf{ESS} & \textbf{ESS$/N$} & $\hat{R}$ & $\alpha$ \\
\midrule
CHMC & 4{,}735 & $4.7\times10^{-3}$ & 1.003 & 0.47 \\
HMC ($\epsilon = 0.05$) & 3{,}841 & $3.8\times10^{-3}$ & 1.003 & 0.33 \\
HMC ($\epsilon = 0.15$) & 3{,}107 & $3.1\times10^{-3}$ & 1.004 & 0.42 \\
HMC ($\epsilon = 0.30$) & 4{,}392 & $4.4\times10^{-3}$ & 1.002 & 0.36 \\
HMC ($\epsilon = 0.60$) & 6{,}815 & $6.8\times10^{-3}$ & 1.001 & --- \\
\bottomrule
\end{tabular}
\end{table}
The ESS results indicate that the proposed algorithm achieves satisfactory sampling efficiency, with consistently reasonable ESS and ESS$/N$ values across all tempering levels. 
Since the relaxed HMC dynamics can traverse near-coplanar configurations outside the constraint manifold $\mathcal{M}$, the cold CHMC chain, which targets the true distribution $\mu$, is able to explore disconnected components via replica exchange. This mechanism enables effective global exploration despite the underlying topological barriers.
exchange acceptance rates ranging from $0.36$ to $0.47$ indicate sufficient overlap between adjacent tempering levels, ensuring efficient information propagation across the hierarchy. In particular, the exchange acceptance probability between the CHMC chain and the coldest HMC chain ($\epsilon = 0.05$) is sufficiently high to support effective mixing. If needed, a smaller value of $\epsilon$ could be introduced to further increase this acceptance rate.
Moreover, $\hat{R} \leq 1.004$ for all chains indicates satisfactory convergence.

 This tetrahedral molecule example provides a controlled and analytically tractable test case for constrained sampling on disconnected manifolds. The known probability ratio $P(\mathcal{M}_+)/P(\mathcal{M}_-)$ enables rigorous quantitative validation of estimation accuracy, while the underlying geometric structure reflects configurations encountered in rigid molecular dynamics and stereochemistry. The results demonstrate that the proposed algorithm overcomes the topological trapping inherent in standard CHMC for molecular dynamics problems, delivering accurate and computationally efficient sampling of target distributions supported on disconnected constraint manifolds.

\section{Discussion}\label{sec6}
In this work, we proposed a replica exchange constrained Hamiltonian Monte Carlo framework for sampling probability measures supported on implicitly defined manifolds that may contain multiple disconnected components. The method couples an exact constrained chain evolving on the manifold with a relaxed auxiliary chain defined in a tubular neighborhood, and connects them through a geometrically informed exchange mechanism that preserves the correct target distribution.
The core theoretical contribution is a rigorous treatment of the exchange mechanism between a singular measure supported on a manifold and a relaxed measure supported on the ambient space. Unlike classical parallel tempering, which assumes all chains share a common reference measure, our setting requires a careful change-of-variables correction arising from the Jacobian of the exchange map. We derived a closed-form expression for this Jacobian in Theorem \ref{thm:jacobian_gram} and proposed a computationally tractable Gram determinant approximation, showing that the induced bias in total variation distance is of order $\sqrt{\varepsilon}$ (Theorem \ref{thm:gram_bias}). We further established detailed balance, irreducibility, and ergodicity of the full transition kernel in Theorem \ref{thm:pt_full}, providing a complete theoretical foundation for the algorithm. The three case studies demonstrate that the proposed algorithm enables transitions between disconnected components and recovers target distributions that standard CHMC fails to explore.

A fundamental trade-off throughout this method is the choice of the relaxation parameter $\varepsilon$. Small values of $\varepsilon$ concentrate the relaxed chain tightly around the manifold, reducing the approximation bias of the Gram correction and increasing the probability that proposed exchange states pass the reversibility check. However, it can also restricts the relaxed chain from traversing the regions of ambient space connecting distinct manifold components, thereby reducing the practical transition rate between disconnected branches. The multi-level tempering ladder provides a resolution, chains at larger relaxation scales explore the ambient space freely, while lower-temperature chains progressively refine this exploration toward the constrained target. Selecting an appropriate ladder of relaxation parameters remains a practically important calibration task, and adaptive schemes for tuning these parameters represent a natural direction for future work.

Several limitations of the current framework merit further discussion. First, the method depends on the existence and numerical tractability of the tubular neighborhood projection, which necessitates solving a nonlinear system at each proposed exchange step. Although Newton's method typically succeeds for smooth, well-conditioned constraints, the projection can become ill-conditioned near high-curvature regions of the manifold or when the constraint Jacobian approaches rank deficiency. Developing numerically robust projection schemes for these challenging regimes is an area for future research. Second, the current approach couples a single cold constrained chain with one or more relaxed chains. Extending this framework to accommodate multiple constrained chains, such as targeting distinct identifiable combinations in non-identifiable models, or adapting it to infinite-dimensional function spaces for PDE-constrained Bayesian inverse problems, would expand its practical applicability.

 Beyond the specific applications considered here, the proposed framework is broadly applicable to any setting in which holonomic constraints induce a disconnected configuration space. This includes problems in robotics and mechanism design, algebraic statistics, and the study of symmetry-induced degeneracies in physics models. The similarity between disconnected manifolds and multimodality suggests that insights from parallel tempering literature can be adapted to constrained manifold sampling, benefiting both research communities.

\bigskip
% \begin{center}
% {\large\bf SUPPLEMENTAL MATERIALS}
% \end{center}

% \begin{description}

% \item[Title:] Brief description. (file type)

% \item[R-package for  MYNEW routine:] R-package ÒMYNEWÓ containing code to perform the diagnostic methods described in the article. The package also contains all datasets used as examples in the article. (GNU zipped tar file)

% \item[HIV data set:] Data set used in the illustration of MYNEW method in Section~ 3.2. (.txt file)

% \end{description}

\bibliographystyle{plainnat} 

% This points to your cas-refs.bib file
\bibliography{cas-refs}

\end{document}